\renewcommand{\epsilon}{\varepsilon}
\newcommand{\cat}[1]{\mathbf{#1}}
\newcommand{\dblcat}[1]{\mathbb{#1}}
\renewcommand{\t}[1]{\text{#1}}
\newcommand{\from}{\colon}
\newcommand{\xto}[1]{\xrightarrow{#1}}
\newcommand{\bimonspcsp}[1]{\mathbf{MonicSp(Csp(#1))}}
\newcommand{\spcsp}[1]{\mathbf{Sp(Csp(#1))}}
\newcommand{\dblspcsp}[1]{\mathbb{S}\mathbf{p(Csp(#1))}}
\DeclareMathOperator{\colim}{colim}
\newtheorem{thm}{Theorem}[section]
\newtheorem{lem}[thm]{Lemma}
\theoremstyle{remark}
\theoremstyle{definition}
	\newtheorem{ex}[thm]{Example} 
	\newtheorem{defn}[thm]{Definition}
\tikzset{rewritenode/.style={shape=circle,fill=rewritecolor,scale=0.25,font=\Huge}}
\tikzset{RWopen/.style={shape=circle,draw=black,fill=white,scale=0.5,font=\Huge}}
\tikzset{RWclosed/.style={shape=circle,fill=black,scale=0.5,font=\Huge}}
\tikzset{CDnode/.style={shape=circle,fill=white,scale=.5}}
\tikzset{zxgreen/.style={shape=circle,draw,thick,fill=green}}
\tikzset{zxred/.style={shape=circle,draw,thick,fill=red}}
\tikzset{zxyellow/.style={shape=rectangle,draw,thick,fill=yellow}}
\tikzset{zxdiamond/.style={shape=diamond,fill=black,inner sep=2.75}}
\tikzset{zxopen/.style={shape=circle,draw,thick,inner sep=2pt}}
\tikzset{->-/.style={decoration={%
			markings,
			mark=at position .5 with {\arrow{>}}},postaction={decorate}}
}
\tikzset{->-pos/.style={decoration={%
			markings,
			mark=at position #1 with {\arrow{>}}},postaction={decorate}}
}
\tikzset{->-/.style={decoration={%
			markings,
			mark=at position .5 with {\arrow{>}}},postaction={decorate}}
}
\tikzset{->-pos/.style={decoration={%
			markings,
			mark=at position #1 with {\arrow{>}}},postaction={decorate}}
}
\title{Categorifying the ZX-calculus}
\author{Daniel Cicala
\institute{University of California, Riverside}
\institute{Department of Mathematics}
\email{cicala@math.ucr.edu}
}
\begin{document} 

\maketitle

\begin{abstract}
	We build a symmetric monoidal and 
	compact closed bicategory
	by combining spans and cospans
	inside a topos.
	This can be used as a framework
	in which to study open networks 
	and diagrammatic languages.
	We illustrate this framework with
	Coecke and Duncan's zx-calculus
	by constructing a bicategory with 
	the natural numbers for 0-cells,
	the zx-calculus diagrams for 1-cells, and
	rewrite rules for 2-cells. 
\end{abstract}

%
%

%
%

\section{Introduction}
\label{sec:Introduction}

Compositionality is increasingly
becoming recognized as 
a viable point of view
from which to study 
complex systems such as 
those found in physics
	\cite{AbramCoecke_CatSemanticQuantum}, 
computer science 
	\cite{SassoneSobocinski_PetriNets}, 
and biology 
	\cite{BaezFongPollard_CompMarkovProcesses}.  
The focus is on 
connecting together smaller, 
simpler systems.  
The word `compositionality' suggests 
the relevancy of category theory.
This is indeed the case.
In fact, this paper
fits into a 
larger project of
establishing suitable categorical frameworks
in which to study composable systems
\cite{BaezCoyaFong_Props,
BaezFong_CompPassLinNets,
BaezFongPollard_CompMarkovProcesses,
BaezPollard_CompFrameRxNets,
Cicala_SpansCospans,
CicalaCourser_BicatSpansCospan,
Pollard_OpenMarkov}.
Open diagrams and diagrammatic languages
are typical players in compositional approaches. 
In our context, the adjective `open' is 
an established term
\cite{Dixon_OpenGraphs,
Merry_BangGraphs,
Pollard_OpenMarkov}
referring to a structure equipped with 
chosen inputs and outputs. 
The primary advantage of
open diagrams 
is the ability to work
within a more intuitive syntax.
Such diagrams are typically 
constructed with graph or 
topological string-like objects.  
Occasionally, additional data 
is attached to nodes, edges, or strings 
as needed. 
For example, open Markov chains
	\cite{Pollard_OpenMarkov}
use graphs whose nodes are 
labeled with a \emph{population} 
and edges with the rate at which 
the population of the source node 
shifts to the target node.  

Another common feature shared 
between diagrammatic languages is 
the notion of equality.  
Formal languages are often
equipped with a collection of rewrite rules.
For us, a rewrite rule is an equivalence relation
on diagrams stating when we can
replace a diagram $D$ with diagram $D'$.
This is our equality. 

Currently, syntax for diagrammatic calculi 
are usually captured with 1-categories
by encoding diagrams as morphisms
and diagram connection by composition. 
As mentioned above,
rewrite rules provide a notion of
equality between diagrams. 
However, 1-categorical frameworks squash 
the information contained in a rewrite rule.
That is, there is no way to reconstruct 
a rewrite rule from an equality. 
To more fully capture a system, 
rewrite rules ought to have better
representation in our syntax.
We accomplish this is by 
including them as 2-cells in a bicategory.  

What should such a bicategory look like? 
The 0-cells should 
communicate whether a pair of diagrams
can be connected.
Taking 0-cells to be sets, 
then a 1-cell $D \from x \to y$ is 
a diagram whose set of inputs is $x$ 
and set of outputs is $y$. 
Hence, we can connect to $D$ any
diagram whose inputs are $y$ or 
outputs are $x$.  
The 2-cells $D \Rightarrow D'$ are 
rules that rewrite $D$ into $D'$.  

To better envision such a bicategory, 
consider a hypothetical system
modeled by a directed graph $D$. 
Suppose that we would like $D$ 
to have inputs $I$ and outputs $O$, 
each subsets of the $D$-nodes.  
We can build this information 
into a cospan $I \to D \gets O$
of graphs by taking $I$ and $O$ 
to be edgeless graphs.  
Our 1-cells are cospans like this.  
This means that the 
inputs and outputs are $0$-cells.
Rewrite rules are included 
through \emph{double pushout rewriting}
	\cite{Corradini_AlgApprGraphTrans}. 
This presents a rule rewriting $D$ to $D'$ as 
a span of graphs 
$D \gets K \to D'$, 
through some intermediary graph $K$. 
As 2-cells in our bicategory, rewrite rules
are isomorphism classes of spans of cospans. 
These are depicted in Figure \ref{fig:spans_of_cospans}.  
Kissinger
	\cite{Kissinger_Pictures}
also modeled rewriting using spans of cospans
under the term \emph{cospan rewrites}.	 

The author proved that 
this construction
actually gives a bicategory
	\cite{Cicala_SpansCospans}.  
In particular, 
starting with a topos $T$, there is a 
bicategory $\bimonspcsp{T}$ whose 
0-cells are the $T$-objects, 
1-cells are cospans in $T$, and
2-cells are isomorphism classes of 
monic legged spans of cospans in $T$.
A topos and monic span legs are required to
ensure that the interchange law holds. 
This is not overly restrictive,
because the spans used in 
double pushout rewriting are
often assumed to have 
monic legs \cite{Habel}.
Though not discussed in that paper, 
we can bypass both needs 
by taking coarser classes of 2-cells. 
Specifically, we consider the bicategory 
$\spcsp{C}$ 
where $\cat{C}$ is a category with 
finite limits and colimits. 
This differs from $\bimonspcsp{T}$ by 
taking 2-cells to be all spans of cospans
up to \emph{having the same domain and codomain}.

The reason for constructing
$\bimonspcsp{T}$ and $\spcsp{C}$ 
is to provide syntactic bicategories for 
diagrammatic languages. 
Which bicategory we use depends
on the nature of the diagrammatic
language of interest.
Regardless of which bicategory we use,
we typically start by letting
$\cat{T}$ or $\cat{C}$ be the 
topos $\cat{Graph}$ of directed graphs
or, perhaps, the topos consisting
of some other flavor of graphs. 
For now, we look at 
$\bimonspcsp{Graph}$ and $\spcsp{Graph}$
and consider, in each,
the sub-bicategory 
that is 1-full and 2-full 
on the edgeless graphs.  
The 1-cells this sub-bicategory are open graphs 
and the 2-cells are ways to rewrite 
one open graph to another.  
Because we are currently painting with broad strokes,
distinguishing between this sub-bicategory in
$\bimonspcsp{Graph}$ or $\spcsp{Graph}$
is inconsequential.
Hence, we commit the sin of
referring to this bicategory as $\cat{Rewrite}$
regardless of where it lives.

Suppose we have a 
diagrammatic language $L$
given by some presentation.  
We must find
a suitable way to identity  
the given generators and relations of $L$
with 1-cells and 2-cells, 
respectively, of $\cat{Rewrite}$.
These 1-cells and 2-cells, in turn, 
generate a sub-bicategory of $\cat{Rewrite}$ 
that gives a bicategorical syntax 
for $L$.  
It was shown by the author and Courser
	\cite{CicalaCourser_BicatSpansCospan} 
that $\bimonspcsp{T}$ is 
symmetric monoidal and compact closed
in the sense of Stay 
	\cite{Stay_CompactClosedBicats}.
We employ a similar argument
showing the same is true of $\spcsp{C}$.
Therefore $\cat{Rewrite}$ and 
all of sub-bicategories
we generate within $\cat{Rewrite}$ 
are symmetric monoidal and compact closed.

Due to using isomorphism classes for 2-cells
instead of any other equivalence classes,
it would seem that beginning with 
$\bimonspcsp{T}$ is the natural construction. 
Indeed, it is suitable for working with systems 
admitting a graphical syntax such as
the open Markov processes mentioned above. 
However, systems whose 
syntax has topological information, 
like string diagrams, 
introduce the challenge of 
conveying topological information 
with only graphs. 
To contend with this problem, 
we begin with $\spcsp{C}$ 
because it has a 2-cell 
not present in $\bimonspcsp{T}$.
This 2-cell rewrites an edge into a single node,
thus forces an analogy between an edge
and a string that behaves like an identity.
As we will see, this rewrite rule is given by 
a span with a non-monomorphic leg, 
leading us to use
$\spcsp{C}$ instead of $\bimonspcsp{T}$. 

The purpose of this paper is 
to illustrate our framework with
the zx-calculus.  
The backstory of the zx-calculus dates to 
Penrose's tensor networks
	\cite{Penrose_NegDimTensors} 
and, more recently, 
to the relationship between 
graphical languages and 
monoidal categories 
	\cite{JoyalStreet_GeomTensorCalc,Selinger_GraphicsMonCats}.  
Abramsky and Coecke 
capitalized on this relationship 
when inventing a categorical framework 
for quantum physics
	\cite{AbramCoecke_CatSemanticQuantum}.  
Soon after, 
Coecke and Duncan 
introduced a diagrammatic language 
in which to reason about 
\emph{complementary quantum observables} 
	\cite{CoeckeDuncan_QuantumObsInitialReport}. 
After a fruitful period of development
\cite{CoeckeEdwardsSpekkens_PhaseGrpsNonLocality,
CoeckePerdix_EnvironClassicChannels,
DuncanPerdix_GraphStatesEulerDecomp,
DuncanPerdrix_RewritingQuantumCompu,
EvansDuncanLangPanan_ClassMutualUnbias,
Pavlovic_QuanClassNondetermCompu}, 
a full presentation of 
the zx-calculus was published
	\cite{CoeckeDuncan_QuantumObsFullPaper}.
The completeness of 
the zx-calculus for 
stabilizer quantum mechanics 
was later shown by Backens 
	\cite{Backens_Completeness}.  

The zx-calculus begins with 
the five diagrams depicted 
in Figure \ref{fig:ZX_generators}.
\begin{figure}
	\fbox{%
		\begin{minipage}{0.975\textwidth}
			\centering
			\subcaptionbox{Wire}[2 cm]{%
				\centering
				\includegraphics{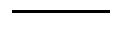}
			}
			\subcaptionbox{Green spider}[2.5 cm]{%
				\centering
				\includegraphics{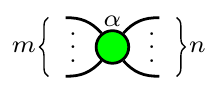}
			}
			\subcaptionbox{Red spider}[2.5 cm]{%
				\centering
				\includegraphics{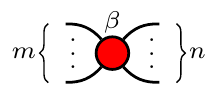}
			}
			\subcaptionbox{Hadamard}[2.5 cm]{%
				\centering
				\includegraphics[]{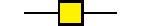}
			}
			\subcaptionbox{Diamond}[2 cm]{%
				\centering
				\includegraphics{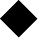}
			}
		\end{minipage}
	}
	\caption{Generators for the category $\mathbf{zx}$}
	\label{fig:ZX_generators}
\end{figure}
The dangling wires 
on the diagrams' left 
are \emph{inputs} and 
those on the right are \emph{outputs}. 
By connecting inputs to outputs, 
we can form larger diagrams.  
Formalizing this perspective, 
we let these diagrams 
generate the morphisms of 
a dagger compact category 
$\mathbf{zx}$ whose 
objects, the non-negative integers, 
count the inputs and outputs
of a diagram.  
Section 
	\ref{sec:ZxCalc} 
contains a presentation of $\mathbf{zx}$ 
along with a brief discussion on 
the origins of the generating morphisms 
(Figure \ref{fig:ZX_generators}) 
and relations (Figure \ref{fig:ZX_equations}). 
We also mention relevant software, 
Quantomatic 
	\cite{AbramCoecke_CatSemanticQuantum,
		KissingerZamd_Quantomatic} 
and Globular 
	\cite{BarKissingerVicary_Globular}.  

Our goal in this paper is
to generate a
symmetric monoidal and compact closed (SMCC) 
bicategory $\underline{\mathbf{zx}}$
that provides a syntax for the zx-calculus. 
Our first steps towards constructing 
$\underline{\mathbf{zx}}$
is in Section 
	\ref{sec:RewritingOpenGraphs} 
where we fit open graphs 
into an SMCC bicategory.
To this end, 
we slightly modify recent
work by Courser and the author
	\cite{Cicala_SpansCospans,
		CicalaCourser_BicatSpansCospan}
in order to produce an SMCC bicategory 
with graphs as 0-cells, 
cospans of graphs as 1-cells, 
and certain equivalence classes 
of spans of cospans of graphs as 2-cells
(see Figure \ref{fig:spans_of_cospans}). 
As discussed above, this has an
SMCC sub-bicategory $\mathbf{Rewrite}$
that provides an ambient space 
in which to generate systems 
modeled on open graphs.  

\begin{figure}
	\fbox{%
		\begin{minipage}{0.975\textwidth}
			\centering
			\subcaptionbox{A span of cospans}[5cm]{%
				\centering				
				\includegraphics{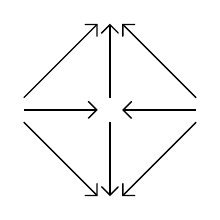}
			}
			\subcaptionbox{A span of cospans morphism}[5cm]{%
				\centering
				\includegraphics{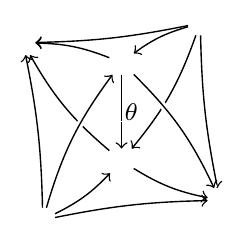}
			}
		\end{minipage}
	}
	\caption{A generic $2$-cell in $\mathbf{Sp}(\mathbf{Csp}(C))$}
	\label{fig:spans_of_cospans}
\end{figure}
However, this version of $\cat{Rewrite}$ does not 
contain everything we need.
In Section \ref{sec:OpenGraphsOverSzx}, 
we fill the gap by introducing  
\emph{open graphs over $S_{\text{zx}}$}.  
That is, we pick a graph $S_{\text{zx}}$
\[
	\includegraphics{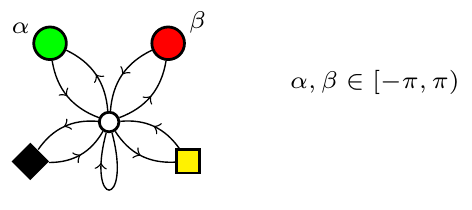}
\]  
whose nodes coincide with the node types 
found in the zx-calculus diagrams, with
one exception: 
the white node in the center.
This node replaces the
dangling edges in the zx-diagrams. 
A graph morphism $G \to S_{\text{zx}}$ 
then corresponds to a zx-morphism
by transporting the node types to $G$
via the fibres of the map.
In his thesis
	\cite{Kissinger_Pictures}, 
Kissinger also colored
graphs this way.  
We then form an SMCC bicategory with 
graphs over $S_{\text{zx}}$ as 0-cells, 
cospans of graphs over $S_{\text{zx}}$ as 1-cells, 
and spans of cospans as 2-cells.  
These spans of cospans are taken
up to the equivalence relation
obtained by relating 2-cells with the
same domain and codomain.
In analogy to the formation of $\cat{Rewrite}$,
we find a sub-bicategory $\mathbf{zxRewrite}$
that can be thought of as containing all 
open graphs over $S_{\text{zx}}$ and their rewrites.

The bicategory $\mathbf{zxRewrite}$ 
is a space in which we can generate 
SMCC sub-bicategories. 
In Section \ref{sec:zx_categorified}, 
we give a presentation for 
a sub-bicategory $\underline{\mathbf{zx}}$ 
of $\mathbf{zxRewrite}$ whose
1-cells correspond to 
zx-calculus diagrams and
2-cells to the relations between them.  
After constructing $\underline{\mathbf{zx}}$, 
we decategorify it to a 1-category 
$|| \underline{ \mathbf{zx} } ||$ 
by identifying 1-cells whenever 
there is a 2-cell between them.  
Though this seems asymmetrical, 
we actually get an equivalence relation
because of the dual nature of spans.  
In the main result, 
Theorem \ref{thm:equiv_of_zx_cats}, 
we construct a dagger compact functor 
$|| \underline{ \mathbf{zx} } || \to \mathbf{zx}$ 
witnessing an equivalence of categories.  
It is in this sense that we are categorifying
the zx-calculus.

The author would like to thank 
John Baez for many helpful ideas and 
discussions that contributed to this paper. 
A debt of gratitude is also owed
to three anonymous referees for their
insightful comments on
an earlier version of this paper
written for the 2017 Quantum Physics and Logic 
conference in Nijmegen, Netherlands.

%
%

\section{The zx-calculus}
\label{sec:ZxCalc}

One of the most fascinating features 
of quantum physics is the 
incompatibility of observables. 
Roughly, an observable is a 
measurable quantity of some system, 
for instance the spin of a photon.  
Incompatibility is in stark contrast 
to classical physics where
measurable quantities are compatible
in that, we can obtain
arbitrarily precise values 
at the same time.   
Arguably, the most famous example 
of incompatibility is 
Heisenberg's uncertainty principal
which places limits to 
the precision that one 
can simultaneously measure a 
pair of observables: 
position and momentum.  
There are different levels of 
incompatibility amongst 
pairs of observables. 
When such a pair is 
maximally incompatible, 
meaning that 
knowing one with complete precision 
implies total uncertainty of the other, 
we say they are 
\emph{complementary observables}.  

Hilbert spaces are, historically, 
the typical framework 
in which one might 
study observables.  
This formalism has been
quite successful
despite involving difficult calculations
and non-intuitive notation. 

The zx-calculus was 
developed by Coecke and Duncan 
	\cite{CoeckeDuncan_QuantumObsFullPaper} 
as a high-level language 
to facilitate such computation
between complementary observables.  
It was immediately used to 
generalize both \emph{quantum circuits}
	\cite{NielsenChuang_QuantumCompInfo}  
and the \emph{measurement calculus}
	\cite{DanosKashefiPanang_MeasurementCalc}. 
Its validity was further justified 
when Duncan and Perdrix 
presented a non-trivial method 
of verifying measurement-based 
quantum computations
	\cite{DuncanPerdrix_RewritingQuantumCompu}.  
At its core, the zx-calculus is 
an intuitive graphical language 
in which to reason about 
complementary observables. 

The five \emph{basic diagrams} 
in the zx-calculus are depicted 
in Figure \ref{fig:ZX_generators}
and are to be read from left to right. 
They are
\begin{itemize}
	\item a \emph{wire} with 
		a single input and output,
	\item \emph{green spiders} with 
		a non-negative integer number of 
		inputs and outputs and 
		paired with a phase $\alpha \in [-\pi,\pi)$,
	\item \emph{red spiders} with 
		a non-negative integer number 
		inputs and outputs and 
		paired with a phase $\beta \in [-\pi,\pi)$,
	\item the \emph{Hadamard node} with
		 a single input and output, and
	\item a \emph{diamond node} with 
		no inputs or outputs.
\end{itemize}
The wire plays the role 
of an identity, much like a 
plain wire in an electrical circuit, 
or straight pipe in a plumbing system. 
The green and red spiders 
arise from a pair of 
complementary observables.  
Incredibly, observables 
correspond to certain 
commutative Frobenius algebras $A$ 
living in a 
dagger symmetric monoidal category 
	$\mathbf{C}$. 
Moreover, a pair of 
complementary observables gives a 
pair of Frobenius algebras whose 
operations interact via laws like 
those of a Hopf algebra 
	\cite{CoeckePavlovic_QuanMeasSums, 
		CoeckePavVicary_OrthogBases}.  
This is particularly nice because 
Frobenius algebras have 
beautiful string diagram representations. 
If $I$ is the monoidal unit of $\mathbf{C}$, 
there is an isomorphism 
	$\mathbf{C}(I,A) \to \mathbf{C}(A,A)$ 
of commutative monoids 
that gives rise to a 
group structure on $A$ 
known as the \emph{phase group}.  
The spider phases arise from this group.
The Hadamard node embodies 
the Hadamard gate. 
The diamond is a scalar obtained when
connecting a green and red node together.  
A deeper exploration of these notions 
goes beyond the scope of this paper.  
For those interested, 
the original paper on the topic
	\cite{CoeckeDuncan_QuantumObsFullPaper}
is an excellent place read more.

In the spirit of compositionality, 
we present a category 
$\mathbf{zx}$ below 
whose morphisms are 
generated by the 
five basic diagrams. 
To anticipate the shift
in terminology, we will
refer to zx-calculus diagrams
as \emph{$\mathbf{zx}$-morphisms} 
and continue to use the
qualifier `\emph{basic}' in the 
same way.

Observe that there is a
non-negative number of 
wires dangling on the
left and right side of each
basic $\mathbf{zx}$-morphism. 
Those on the left, 
we call \emph{inputs} 
and those on the right \emph{outputs}. 
These basic $\mathbf{zx}$-morphisms
generate the morphisms of a
dagger compact category $\mathbf{zx}$
whose objects are the non-negative integers.
This category was introduced 
by Coecke and Duncan 
	\cite{CoeckeDuncan_QuantumObsFullPaper} 
and further studied by Backens 
	\cite{Backens_Completeness}. 
To compose in $\mathbf{zx}$, 
connect compatible diagrams 
along an enumeration of the
the inputs and the outputs. 
A monoidal structure
is given by adding numbers and 
taking the disjoint union of $\mathbf{zx}$-morphisms. 
Relations between the morphisms are
given below, but we note here that
the wire is the identity on $1$.
The identity on $n$ is 
the disjoint union of $n$ wires. 
The symmetry and compactness 
of the monoidal product provide 
a braiding, evaluation, and coevaluation morphisms:
respectively,
\[
\includegraphics{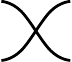}
\quad \quad \quad \quad 
\raisebox{-0.25\height}{%
	\includegraphics[scale=0.75]{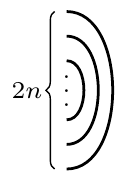}
}
\quad \quad \quad \quad 
\raisebox{-0.25\height}{%
	\includegraphics[scale=0.75]{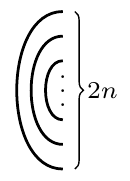}
}
\]
The evaluation and coevalutation maps 
are of type 
	$2n \to 0$ and $0 \to 2n$ 
for each object $n \geq 1$ and 
the empty diagram for $n=0$.  
On the spider diagrams, 
the dagger structure 
swapps inputs and outputs then, 
multiplies the phase by $-1$:
\[
	\includegraphics{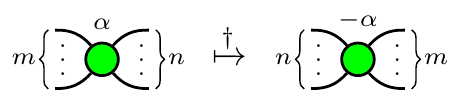}
\]
The dagger acts trivially on the 
wire, Hadamard, and diamond elements. 

\begin{figure}
	\fbox{
		\begin{minipage}{0.97\textwidth}
			\centering
			\subcaptionbox{Spider}{%
				\centering
				\includegraphics{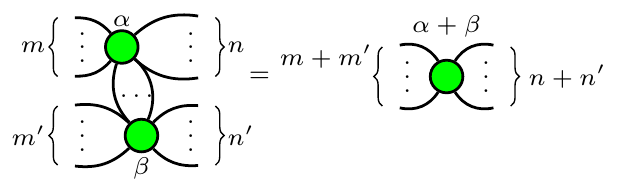}
			}
			\quad \quad
			\subcaptionbox{Bialgebra equation}{%
				\centering
				\includegraphics{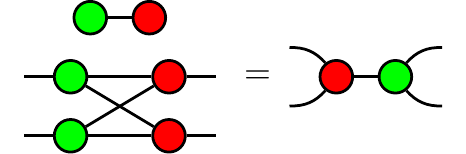}
			}
			\vspace{1em} 
			\linebreak
			\subcaptionbox{Copy equation}{%
				\centering
				\includegraphics{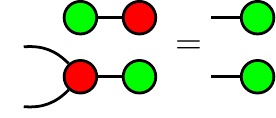}
			}
			\quad \quad 
			\subcaptionbox{$\pi$-Copy equation}{%
				\centering
				\includegraphics{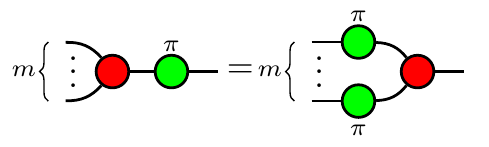}
			}
			\quad \quad 
			\subcaptionbox{Cup equation}[2.5cm]{%
				\centering
				\includegraphics{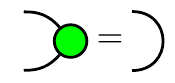}
			}
			\vspace{1em} 
			\linebreak
			\subcaptionbox{Trivial spider equation}[4cm]{%
				\centering
				\includegraphics{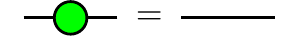}
			}
			\quad \quad \quad \quad \quad \quad 
			\subcaptionbox{$\pi$-Commutation equation}{%
				\centering
				\includegraphics{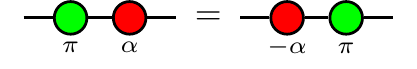}
			}
			\vspace{1em} 
			\linebreak
			\subcaptionbox{Color change equation}{%
				\centering
				\includegraphics{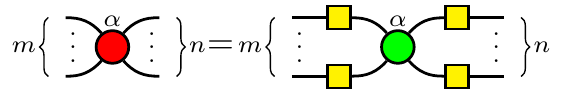}
			}
			\subcaptionbox{Loop equation}[2.5cm]{%
				\centering
				\includegraphics{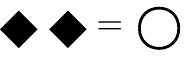}
			}
			\subcaptionbox{Diamond equation}[3.5cm]{%
				\centering
				\includegraphics{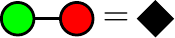}
			}
		\end{minipage}
	}
	\caption{Relations in the category $\mathbf{zx}$}
	\label{fig:ZX_equations}
\end{figure}

Thus far, we have a 
presentation for a 
free dagger compact category. 
However, there are relations 
between $\mathbf{zx}$-morphisms.
These are given in Figure \ref{fig:ZX_equations},  
though we also include equations obtained by 
exchanging red and green nodes, 
daggering, and 
taking diagrams up to 
ambient isotopy in $4$-space. 
These listed relations are called \emph{basic}.  
Spiders with no phase indicated 
have a phase of $0$. 
The emergence of these relations 
goes beyond the scope of this paper
and we point the interested reader to
the genesis of the zx-calculus 
	\cite{CoeckeDuncan_QuantumObsFullPaper}
for an explanation.

A major advantage 
of using string diagrams, 
apart from their intuitive nature, 
is that computations are 
more easily programmed into 
computers.  
Indeed, graphical proof assistants
like Quantomatic 
	\cite{BarKissingerVicary_Globular,
		DixonDuncanKissinger_QuantomaticWebsite} 
and Globular 
	\cite{BarKissingerVicary_Globular} 
were tailor made for such 
graphical reasoning.  
The logic of these programs are 
encapsulated by 
double pushout rewrite rules.  
However, 
the algebraic structure of $\mathbf{zx}$ 
and other graphical calculi 
do not contain the rewrite rules 
as explicit elements.  
Perhaps, 
conceiving of rewrite rules 
as actual elements
in the syntax can prove
beneficial for software programmers.

%
%

\section{Rewriting open graphs}
\label{sec:RewritingOpenGraphs}

Surely, the $\mathbf{zx}$-morphisms 
are reminiscent of directed graphs.  
Hence there is a reasonable optimism
that we can model
the zx-calculus with graphs.  
However, our hope is tempered 
by some clear differences between 
graphs and $\mathbf{zx}$-morphisms.  
For one, graphs do not 
have inputs or outputs. 
In this section, we reconcile 
this particular difference 
by using open graphs. 

Open graphs and 
their morphisms have 
been considered by 
Dixon, Duncan, and Kissinger 
	\cite{Dixon_OpenGraphs}
though our conceit is 
slightly different. 
Conceptually, open graphs 
are quite simple.  
Take a directed graph 
and declare some of the nodes 
to be inputs and 
others to be outputs, 
for example
\[
	\includegraphics{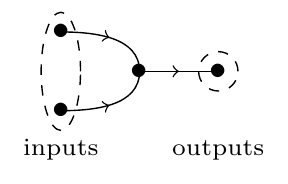}
\]
Given open graphs $G$ and $G'$,
if the set of inputs in $G$
and the set of outputs in $G'$
have the same cardinality,
we can glue them together
along a bijection.
This gives a way 
to turn a pair of 
compatible open graphs
into a single open graph.  
For instance, 
to the above open graph, 
we can connect 
\[
	\includegraphics{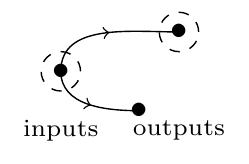}
\]
to form
\[
	\includegraphics{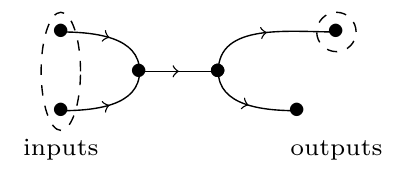}
\]
We make this precise with
cospans and pushouts.

\begin{defn}
	\label{def:Open_Graph}
	Consider the functor 
		$N \colon \mathbf{FinSet}_0 \to \mathbf{Graph}$, 
	on a skeleton of $\mathbf{FinSet}$, 
	defined by the letting 
	$N(X)$ be the edgeless graph 
	with nodes $X$.  
	An \emph{open graph} is then 
	a cospan in the category 
		$\mathbf{Graph}$ 
	of the form 
		$N(X) \to G \gets N(Y)$ 
	for sets $X$ and $Y$.
\end{defn}

The left leg $N(X)$ of the cospan 
gives the \emph{input} and 
the right leg $N(Y)$ the \emph{outputs}.  
Suppose we have another open graph $G'$ 
with inputs $N(Y)$ and outputs $N(Z)$.  
Then we can compose cospans 
\[
	N(X) \to G \gets N(Y) \to G' \gets N(Z). 
\] 
by pushing out over 
$G \gets N(Y) \to G'$ to get 
\[
	N(X) \to G +_{ N ( Y ) } G' \gets N(Z).
\] 
By taking isomorphism classes 
of these pushouts, 
we obtain a category whose 
objects are those in the image of $N$ 
and morphisms are open graphs. 
But we can do better! 

Thus far, we have only 
just described the first layer 
of bicategory
introduced by the author 
under the name $\cat{Rewrite}$
\cite{Cicala_SpansCospans}.
It was shown in a joint work with Courser
\cite{CicalaCourser_BicatSpansCospan} 
that $\mathbf{Rewrite}$ is 
symmetric monoidal and compact closed. 
The monoidal structure is 
induced from the coproduct of graphs.  
Here, we take Stay's definition 
of compact closedness for bicategories
\cite{Stay_CompactClosedBicats}. 
As discussed in the introduction, 
we will work instead with
the slightly modified version of $\cat{Rewrite}$
described in Definition \ref{defn:Rewrite}.

Construction on this modified bicategory
begins with the theorem below.
First, we introduce some needed terminology.
A \emph{span of cospans} is 
a commuting diagram as illustrated 
in Figure \ref{fig:spans_of_cospans}.  
A \emph{parallel class} of spans of cospans is
formed by the equivalence relation 
given by identifying
spans of cospans
with same domain and codomain.

\begin{thm}
	\label{thm:SpCspC_is_SMCC_bicategory}
	Let $\mathbf{C} = (\cat{C}_0,\otimes,I)$ 
	be a finitely complete and cocomplete
	(braided, symmetric) monoidal category 
	such that $\otimes$ preserves colimits.   
	There is a 
	(braided, symmetric) monoidal bicategory 
	$ \spcsp{C} $ 
	whose 
	0-cells are $\cat{C}$-objects, 
	1-cells are cospans in $\cat{C}$, and 
	2-cells are	parallel classes 
	of spans of cospans.
	In case $\cat{C}$ is cocartesian, 
	then $\spcsp{C}$ is also compact closed.
\end{thm}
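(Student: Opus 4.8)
The plan is to build $\spcsp{C}$ on top of the already-established bicategory of spans of cospans from \cite{Cicala_SpansCospans} and then to transport the symmetric monoidal and compact closed structure essentially verbatim from the argument used for $\bimonspcsp{T}$ in \cite{CicalaCourser_BicatSpansCospan}. The one conceptual novelty --- and the reason the topos and monic-leg hypotheses can be dropped --- is that passing to \emph{parallel classes} makes every hom-category thin: between any two parallel $1$-cells $x \to D \gets y$ and $x \to D' \gets y$ there is \emph{at most one} $2$-cell, namely the class of all spans of cospans $D \gets K \to D'$ with fixed feet. Moreover this relation is reflexive, symmetric (a span reverses), and transitive (via the pullback $K \times_{D'} K'$), so each hom-category is in fact a thin groupoid. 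I would record this observation first, since it is what trivializes nearly every coherence axiom to follow.

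First I would assemble the underlying bicategory. Composition of $1$-cells is by pushout, which exists since $\cat{C}$ is finitely cocomplete; vertical composition of $2$-cells is the pullback of apexes (using finite completeness) and horizontal composition is the pushout of apexes, and I would check that both descend to parallel classes, which is immediate because a parallel class is determined by its domain and codomain. The associator and unitor $2$-cells for $1$-cell composition are supplied by the canonical coherence isomorphisms between iterated pushouts; in the thin setting I need only exhibit \emph{a} span of cospans realizing each, after which invertibility is free (the hom-groupoid) and the pentagon and triangle identities hold automatically as equalities of parallel $2$-cells. Crucially, the interchange law --- the single axiom that forced the topos and monicity hypotheses in $\bimonspcsp{T}$ --- now holds for free: both composites are $2$-cells with the same domain and codomain, hence equal. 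This is the heart of why $\spcsp{C}$ needs no such hypotheses.

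Next I would install the monoidal structure following Stay's definition \cite{Stay_CompactClosedBicats}. The tensor product pseudofunctor $\spcsp{C} \times \spcsp{C} \to \spcsp{C}$ is induced objectwise by $\otimes$, with unit the $0$-cell $I$. The only substantive verification is the pseudofunctoriality constraint: the comparison $(E \circ D) \otimes (E' \circ D') \cong (E \otimes E') \circ (D \otimes D')$ must be an isomorphism, and this is exactly where the hypothesis that $\otimes$ preserves colimits is used, since it identifies the tensor of a composition pushout with the composition pushout of the tensors. The braiding and symmetry are induced by those of $\otimes$. Every remaining piece of data --- associator, unitors, braiding, the pentagonator, the hexagonators, the syllepsis --- is either induced from $\cat{C}$ or obtained by exhibiting a witnessing span of cospans, and every coherence axiom of a (braided, symmetric) monoidal bicategory is an equation between parallel invertible $2$-cells, so all hold automatically by thinness.

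Finally, for compact closedness when $\cat{C}$ is cocartesian, $\otimes$ is the coproduct and $I$ the initial object, so colimit-preservation is automatic and each $0$-cell $x$ is its own dual. I would take the unit and counit $1$-cells to be the cospans built from the fold map $x + x \to x$ (as in \cite{CicalaCourser_BicatSpansCospan}), exhibit the cusp $2$-cells witnessing the two zig-zag identities as spans of cospans arising from the pushout computations, and then invoke thinness once more: the swallowtail and all remaining coherence conditions in Stay's definition are equations of parallel $2$-cells and hold automatically. The hard part, and essentially the only place where genuine calculation is unavoidable, is confirming the pseudofunctoriality constraint for $\otimes$ via colimit-preservation and checking that composition is well-defined on parallel classes; everything above the level of $1$-cell data is forced by local thinness.
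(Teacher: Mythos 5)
Your proposal is correct, but it reaches the theorem by a genuinely different route than the paper. You verify Stay's definition \cite{Stay_CompactClosedBicats} directly on $\spcsp{C}$, leaning throughout on your opening observation that parallel classes make every hom-category a thin groupoid, so that interchange, pentagon and triangle, pseudonaturality squares, modification axioms, and the swallowtail all hold as equalities of parallel $2$-cells. The paper never checks Stay's axioms directly: it constructs an isofibrant (braided, symmetric) monoidal \emph{double} category $\dblspcsp{C}$ --- objects of $\cat{C}$, vertical morphisms isomorphism classes of spans with invertible legs, horizontal morphisms cospans, $2$-morphisms parallel classes of \emph{cubical} spans of cospans --- and then invokes Shulman's lifting theorem \cite{Shulman_ConstructSMBicats} to produce the monoidal bicategory. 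Your thinness trick appears there too, but in double-categorical clothing: functoriality of horizontal composition $\odot$ (i.e.\ interchange) is proved by observing that the two composite cubes have coinciding outer squares, hence represent the same parallel class; and the hypothesis that $\otimes$ preserves colimits enters exactly where you predict, to furnish the invertible globular cell $r \from (M_1 \otimes N_1) \odot (M_2 \otimes N_2) \to (M_1 \odot M_2) \otimes (N_1 \odot N_2)$ via $\colim(\Delta \circ \otimes) \cong \otimes(\colim \Delta)$. What Shulman's route buys is that the higher coherence data (tensorator, pentagonator, hexagonators, syllepsis) is generated automatically, at the modest cost of checking isofibrancy --- companions and conjoints exist because the vertical spans have invertible legs --- a step your route skips entirely. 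What your route buys is economy of machinery, but note the residual obligation it carries: thinness disposes of \emph{axioms}, not of \emph{data}, so you must still exhibit a witnessing span of cospans for every constraint cell and every pseudonaturality square in Stay's definition, which is precisely the bulk that Shulman's theorem packages away; your proposal gestures at this but does not carry it out. On compact closedness the two arguments essentially agree --- fold-map cospans $e = (x + x \xto{\nabla} x \gets 0)$ and $c = (0 \to x \xleftarrow{\nabla} x + x)$ --- except that the paper proves an explicit pushout lemma showing the zig-zag composites are identity cospans on the nose, takes identity cusp $2$-cells, and cites \cite{Pstrski_DualObjectsCobord} to complete to a coherent dual pair, whereas your appeal to thinness for the swallowtail is an equally valid, arguably cleaner, finish.
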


We prove this theorem in 
Appendix \ref{sec:SMCC_Bicat_SpCsp}. 
It follows from taking parallel classes of 
2-cells that hom-categories 
in $\spcsp{C}$ are groupoids.

Using parallel classes 
of 2-cells instead of
isomorphism classes
has several advantages.  
First, it removes two conditions 
required of $\cat{C}$
in $\bimonspcsp{C}$: 
that $\cat{C}$ be a topos and 
that the legs in the 
span of cospans be monic. 
It also allows us, 
when $\cat{C}$ is sufficiently
like $\cat{Graphs}$, 
to rewrite an edge into a node  
in analogy to deforming a 
topological string 
into a point. 
Moreover, these 2-cells
give us unitary 1-cells.

Recall there are two ways 
to compose 2-cells in a bicategory.  
Horizontal composition in 
$\mathbf{Sp}(\mathbf{Csp}(\mathbf{C}))$ 
uses pushouts and 
vertical composition uses pullbacks:
\begin{equation}
\label{eq:Hor_and_Vert_Composition}
\raisebox{-0.85\height}{%
	\raisebox{0.3\height}{%
		\includegraphics[scale=0.85]{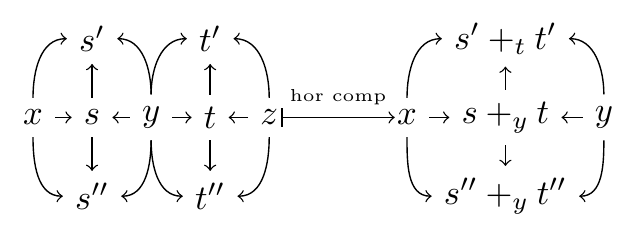} 
	}
	\includegraphics[scale=0.85]{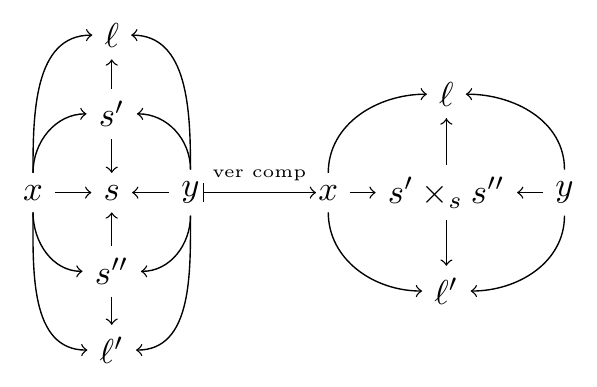}
}
\end{equation}

\begin{defn}
\label{defn:Rewrite}
	Define $\cat{Rewrite}$ to be 
	the 1-full and 2-full 
	SMCC sub-bicategory of 
	$\spcsp{Graph}$ 
	whose 0-cells are 
	exactly those graphs in 
	the image of the functor 
	$N \colon \mathbf{Set}_0 \to \mathbf{Graph}$.
\end{defn}

The conceit of $\mathbf{Rewrite}$ 
is that the 1-cells are open graphs 
whose inputs and outputs are 
chosen by the $0$-cells, 
and the $2$-cells are 
rewrite rules that preserves 
the input and output nodes.
By rewrite rules, we mean those 
taken from the 
double pushout graph rewriting 
approach \cite{Corradini_AlgApprGraphTrans}.
Our open graphs are 
a different formulation
of what amounts to the same concept
explored by Dixon, Duncan, and Kissinger
	\cite{Dixon_OpenGraphs}
though we go a bit further, 
getting an SMCC bicategory
of open graphs instead of a 1-category.
 
Our motivation for constructing $\mathbf{Rewrite}$ 
is not to study it directly, 
but for it to serve 
as an ambient context in which 
to generate SMCC sub-bicategories 
on some collection of open graphs 
and rewriting rules. 
Presenting categories by
open graphs and rewrite rules
is common enough
	\cite{Dixon_OpenGraphs,Fong_AlgOpenSystems,Pollard_OpenMarkov}
to warrant finding a common
framework in which to fit such categories.
However, there are drawbacks to this approach. 
For example, working with open graphs 
is only useful to model graphical languages 
whose terms are equal 
up to ambient isotopy in $4$-space.
This limits the current approach to 
only symmetric monoidal (bi)categories 
as Selinger's work shows
	\cite{Selinger_GraphicsMonCats}.
	
Employing open graphs is 
not quite enough for us
to fully capture the zx-calculus.
We still need to color
our open graphs in a way
that corresponds to the
zx-diagram node types.

%
%

\section{Open graphs over $S_{\text{zx}}$}
\label{sec:OpenGraphsOverSzx}

Last section, 
we began the process of
modeling the zx-calculus with 
graphs by introducing 
open graphs and fitting them 
into a bicategory $\mathbf{Rewrite}$.  
This overcame the issue of 
graphs lacking inputs and outputs.  
In this section, we  
face a different issue.  
Unlike open graphs,
 $\mathbf{zx}$-morphisms 
have multi-sorted nodes.
In this section, we
equip open graphs with
multi-sorted nodes by
working with a 
slice category of 
$\cat{Graph}$. 
Kissinger used a similar
method to give graphs
multi-sorted nodes in 
his thesis
	\cite{Kissinger_Pictures}.

\begin{defn}
	\label{def:graph_over_Szx}
	Let $S$ be a graph.  
	By a \emph{graph over $S$}, 
	we mean a graph morphism $G \to S$. 
	A morphism between graphs over $S$ 
	is a graph morphism $G \to G'$ 
	such that the following
	diagram commutes
	\[
	\begin{tikzpicture}
	\node (1) at (-0.5,1) {$G$};
	\node (2) at (0.5,1) {$G'$};
	\node (3) at (0,0) {$S$};
	\path[->,font=\scriptsize,>=angle 90]
	(1) edge (2)
	(1) edge (3)
	(2) edge (3);
	\end{tikzpicture}
	\]
\end{defn} 

Every graph morphism $G \to G'$ contains 
a map between corresponding nodes sets.
The fibre of this map
colors the $G$-nodes 
with the $G'$-nodes.
We illustrate this with the following example.

\begin{ex}
	\label{ex:basic_graph_over_Szx}
	Let $S_{\text{zx}}$ be the graph
	\begin{equation}
	\label{diag:zx_struture_graph}
	\raisebox{-0.75\height}{
		\includegraphics{InclGrphx--slice--graph_S_zx}
	}
	\end{equation}
	We have not drawn 
	the entirety of $S_{\text{zx}}$. 
	The green and red nodes actually
	run through $[-\pi,\pi)$ and 
	all of them have 
	a single arrow to and from node 
	$
	\begin{tikzpicture}
		\node [zxopen] at (0,0) {};
	\end{tikzpicture}
	$. 
	
	Most of the structure of the 
	basic $\mathbf{zx}$-morphisms 
	is captured by graphs over $S_{\text{zx}}$.
	Consider the following graphs over $S_{\text{zx}}$
	\[
	\label{fig:ZX_1cells_generators}
		\begin{minipage}{0.33\textwidth}
			\centering
			\includegraphics{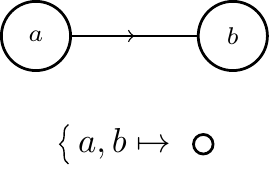}
		\end{minipage}
		\begin{minipage}{0.34\textwidth}
			\centering
			\includegraphics{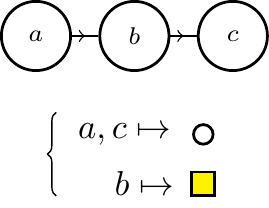}
		\end{minipage}
		\begin{minipage}{0.33\textwidth}
			\centering
			\includegraphics{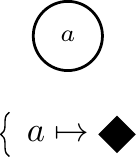}
		\end{minipage}
	\]
	\[
	\begin{minipage}{0.5\textwidth}
		\centering
		\includegraphics{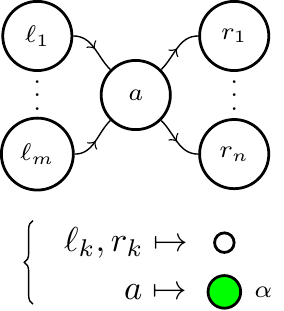}
	\end{minipage}
	\begin{minipage}{0.5\textwidth}
		\centering
		\includegraphics{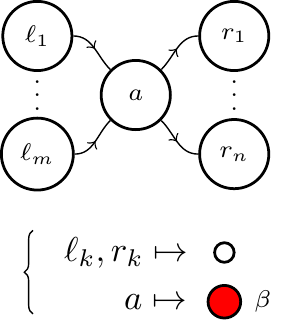}
	\end{minipage}
	\]
	where the diagrams give the 
	domain of each graph over $S_{\text{zx}}$
	and the map is described directly
	underneath each diagram.
	The behavior of each map
	is determined by the image
	of the nodes
	because there is at most 
	one arrow between any 
	two nodes in $S_{\text{zx}}$.  
	The role played by each node 
	of $S_{\text{zx}}$ in 
	providing our desired structure 
	is evident except, perhaps, 
	for the node
	$
	\begin{tikzpicture}
		\node [zxopen] at (0,0) {};
	\end{tikzpicture}
	$.   
	Observe that four of the 
	basic $\mathbf{zx}$-morphisms 
	have dangling wires on either end.  
	Because edges of directed graphs 
	must be attached to 
	a pair of nodes, 
	we use this node
	to anchor the dangling edges.
\end{ex}

\begin{ex}
	\label{ex:graph_over_Szx}
	At this point, 
	we can interpret
	the basic zx-diagrams as 
	graphs over $S_{\text{zx}}$. 
	This extends nicely to 
	a translation of any
	$\mathbf{zx}$-morphism, 
	such as
	\[
	\includegraphics{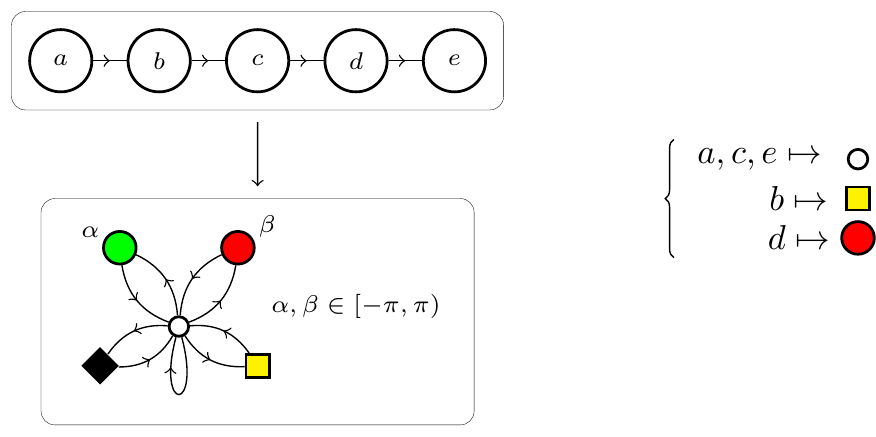}
	\]
	which corresponds to the $\mathbf{zx}$-morphism 
	\[
	\includegraphics{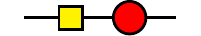}
	\]
\end{ex}

The graphs over $S_{\text{zx}}$ 
in Examples 
\ref{ex:basic_graph_over_Szx} and
\ref{ex:graph_over_Szx}
capture most of the structure 
of the basic $\mathbf{zx}$-morphisms.  
The ability to compose is still missing.
Composition becomes possible
with \emph{open graphs over $S_{\text{zx}}$}.
Again, we use cospans 
to make this precise, 
though combining these two structure 
introduces new considerations. 

Start with the slice category 
$\mathbf{Graph} \downarrow S_{\text{zx}}$ 
of graphs over $S_{\text{zx}}$. 
By Theorem 
	\ref{thm:SpCspC_is_SMCC_bicategory}, 
this gives us an SMCC bicategory 
$\mathbf{Sp} ( \mathbf{Csp} 
	( \mathbf{Graph} \downarrow S_{\text{zx}} ) ) $  
within which we want to 
construct a sub-bicategory 
analogous to $\mathbf{Rewrite}$.  
However, there is a problem. 
Recall that the objects of $\mathbf{Rewrite}$ 
have form $N(X)$ where 
	$N \colon \mathbf{Set}_0 \to \mathbf{Graph}$ 
is the functor sending a set 
to the edgeless graph on that set.  
In $\mathbf{Graph}$, 
there is a unique, 
up to isomorphism,
way to be an edgeless graph.
But in $\mathbf{Graph} \downarrow S_{\text{zx}}$, 
there may be many ways
to be edgeless.
This depends on the number of
graph morphisms to $S_{\text{zx}}$. 
For instance, a graph with 
$n$ nodes and no edges 
can be a graph over $S_{\text{zx}}$ 
in $5^n$ ways. 
We rectify this issue by functorially
turning a set into an edgeless graph. 
Recall that $\mathbf{FinSet}_0$ is a
skeleton of $\mathbf{FinSet}$.

\begin{defn}
	\label{def:Nzx_functor}
	Define a functor 
		$N_{\text{zx}} \colon \mathbf{FinSet}_0 
			\to 
			\mathbf{Graph} \downarrow S_{\text{zx}}$ 
	by 
		\[
			X \mapsto (N_{\text{zx}}(X) \to S_{\text{zx}})
		\]
	where $N_{\text{zx}}(X)$ is 
	the edgeless graph with 
	nodes $X$ that are 	
	constant over 
	$\begin{tikzpicture} 
		\node [zxopen] at (0,0) {}; 
	\end{tikzpicture}$. 
	An \emph{open graph over $S_{\text{zx}}$} 
	is a cospan in 
	$\mathbf{Graph} \downarrow S_{\text{zx}}$ 
	of the form
	\[
		N_{\text{zx}}(X) \to G \gets N_{\text{zx}} (Y).
	\]
\end{defn}

With this definition of
open graphs over $S_{\text{zx}}$, 
we propose the analogue 
to $\mathbf{Rewrite}$. 

\begin{defn}
	Define $\mathbf{zxRewrite}$ 
	to be the symmetric monoidal and 
	compact closed sub-bicategory of 
	$\mathbf{Sp} ( \mathbf{Csp} 
	( \mathbf{Graph} \downarrow S_{\text{zx}} ) ) $  
	that is 1-full and 2-full 
	on objects of the form 
	$N_{\text{zx}} (X)$ for finite sets $X$.  
\end{defn}

Unpacking this definition, 
the 0-cells of $\mathbf{zxRewrite}$ 
are those edgeless graphs over $S_{\text{zx}}$ 
in the image of $N_{\text{zx}}$.  
The 1-cells are exactly 
the open graphs over $S_{\text{zx}}$. 
The 2-cells are the rewritings of
one open graph over $S_{\text{zx}}$ 
into that preserve the inputs and outputs.  
To better understand this bicategory, 
we give an example of 
an open graph over $S_{\text{zx}}$. 
Along with this example, 
we present a new notation 
that allow us to draw the
remaining diagrams 
more compactly.

\begin{ex}
	\label{ex:open_graph_over_Szx}
	Consider the graph over $S_{\text{zx}}$ 
	in Example \ref{ex:graph_over_Szx}.  
	Make this an open graph as follows:
	\[
		\includegraphics[scale=0.9]{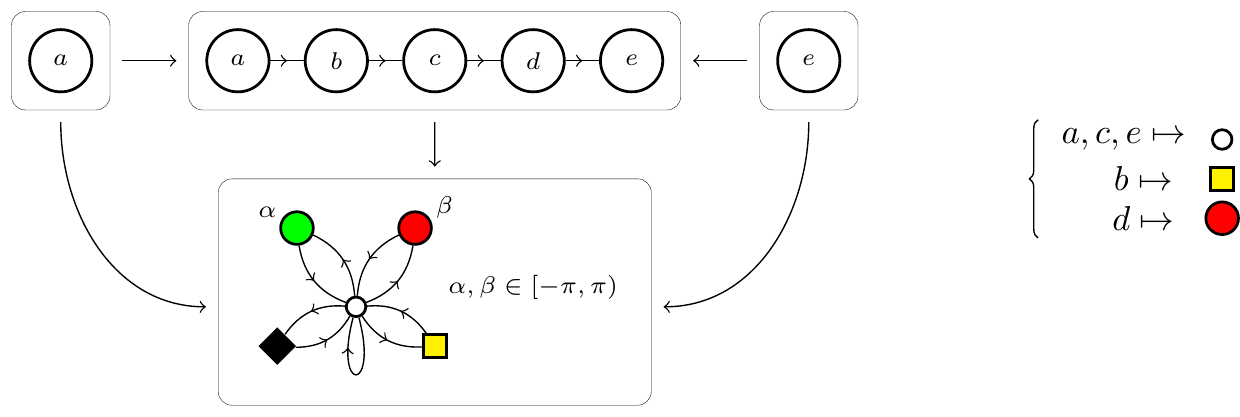}
	\]
	There is a single input, 
	node $a$, and 
	a single output, node $e$. 
	Denote this by
	\[
		\includegraphics{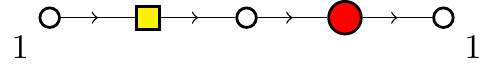}
	\]
	The input nodes are 
	aligned on the far left and 
	the output nodes on the far right.  
	The $1$'s in the corners 
	refer to the cardinality of 
	the input and output node sets.  
	This may seem unnecessary 
	or redundant, 
	but it will clarify 
	several situations arising later on. 
	Thus, we side with consistency 
	and always write the cardinalities.
	Of course, this notation 
	strips a fair amount information 
	regarding the graph morphisms 
	involved in the cospan. 
	However, any missing information 
	should be evident in context.
\end{ex}

Recall that
our interest in 
$\mathbf{Rewrite}$ 
is as an ambient space
in which to generate 
syntactical bicategories 
for graphical languages.
The same is true of
our new bicategory $\mathbf{zxRewrite}$.  
Presently, we are interested in 
1-cells corresponding to 
the basic $\mathbf{zx}$-morphisms 
and 2-cells to the basic relations 
depicted in Figure \ref{fig:ZX_equations}.  
We claim that the
bicategory generated by 
these 1-cells and 2-cells 
categorifies $\mathbf{zx}$.
 
%
%

\section{A categorification of $\mathbf{zx}$}
\label{sec:zx_categorified}

Section \ref{sec:OpenGraphsOverSzx} 
describes a translation of
the basic $\mathbf{zx}$-morphisms 
into open graphs over $S_{\text{zx}}$.
These are depicted in 
Figure \ref{fig:ZX_1cells_generators}
and are referred to as 
\emph{basic open graphs over $S_{\text{zx}}$}.  
To clarify the double instances of $m$ and $n$ 
written in the spider diagrams, 
those below the diagram 
refer to the cardinalities of the cospan legs, 
and those beside the brackets 
count how many nodes are there
(cf.\ Example \ref{ex:open_graph_over_Szx}).  

\begin{figure}[h]
	\fbox{%
		\begin{minipage}{0.975\textwidth}
			\centering
			\subcaptionbox{Wire}{%
				\centering
				\includegraphics{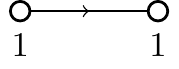}
			}
			\quad \quad \quad \quad
			\subcaptionbox{Hadamard}[2.5cm]{%
				\centering
				\includegraphics[]{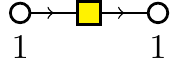}
			}
			\quad \quad \quad \quad
			\subcaptionbox{Diamond}[2.5cm]{%
				\centering
				\includegraphics{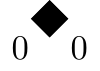}
			}
			\vspace{1em}
			\linebreak
			\subcaptionbox{Green spider}{%
				\centering
				\includegraphics{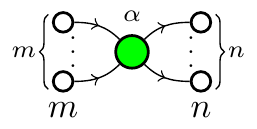}
			}
			\quad \quad \quad \quad
			\subcaptionbox{Red spider}{%
				\centering
				\includegraphics{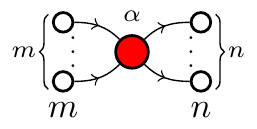}
			}
		\end{minipage}
	}
	\caption{
		Generating $1$-cells for the bicategory $\underline{\mathbf{zx}}$ }
	\label{fig:ZX 1cells_generators}
\end{figure} 

Just as the basic open graphs over $S_{\text{zx}}$ 
capture the generating $\mathbf{zx}$-morphisms, 
we must also include
the basic relations into our framework.  
Figure 
	\ref{fig:ZX_2cells_generators}
depicts our representation
of the basic relations
as spans of open graphs. 
In addition to the basic relations
listed explicitly, we add 
those obtained by
exchanging red and green nodes, 
swapping inputs and outputs, 
turning the spans around,
as well as 
\begin{equation}
\label{eq:2cell_wire_is_identity}
\raisebox{-2.5em}{%
	\includegraphics{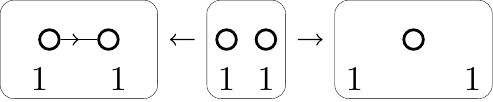}
}
\end{equation}
The last is added to 
ensure that the wire 1-cell 
behaves as the identity, 
a property we lose when 
using graphs.
All of these 2-cells, 
we call \emph{basic}.  

It is important to emphasize 
that the basic 2-cells are 
representatives of an equivalence class. 
That is, we have made a decision 
to present these 2-cells 
as a span of cospans 
whose apex is the 
edgeless graph whose 
node set is the disjoint union 
of the inputs and outputs.  
This is certainly not 
the only representative 
we could have chosen,
though it does seem to be 
the most natural choice. 

Forget for a moment that 
our 2-cells are classes and 
think of only the representatives.  
When we compose 
a span of cospans with its dagger, 
we get a non-trivial way to 
rewrite a 1-cell into itself.  
This ought to be 
distinct from the identity rewrite, 
which does nothing.  
However, our choice of 
equivalence classes 
render these the same.
This hints that a higher rewriting structure 
is hiding in the background.  
Indeed, conveying rewrite rules 
as spans of cospans has the 
advantage of including 
higher level rewrite rules in 
by iterating the process of taking spans. 
Currently, we content ourselves
to work within bicategories and 
leave an exploration for
higher structure for another time. 

We now define 
the bicategory which 
categorifies the zx-calculus.

\begin{figure}[h]
	\fbox{%
		\begin{minipage}{0.975\textwidth}
			\centering
			\subcaptionbox{Spider}[\textwidth]{%
				\includegraphics[scale=0.75]{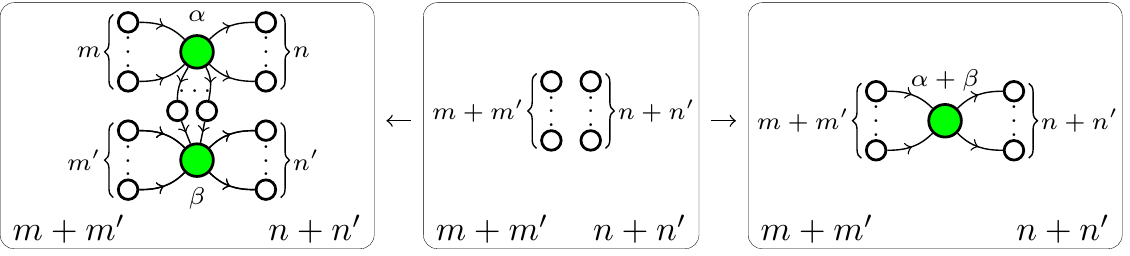}
			}
			\linebreak
			\subcaptionbox{Bialgebra}[\textwidth]{%
				\includegraphics[scale=0.75]{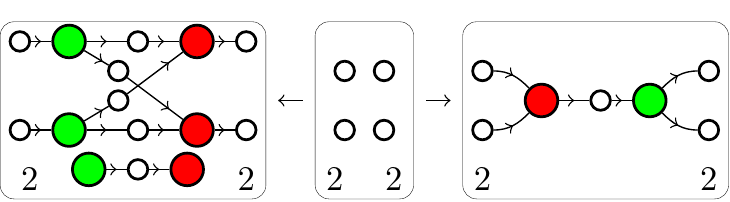}
			}
			\linebreak
			\subcaptionbox{Cup}[0.45\textwidth]{%
				\includegraphics[scale=0.75]{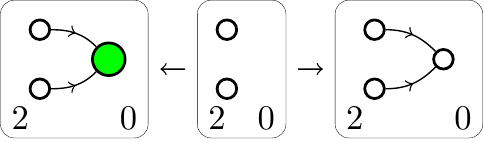}
			}
			\subcaptionbox{Copy}[0.45\textwidth]{%
				\includegraphics[scale=0.75]{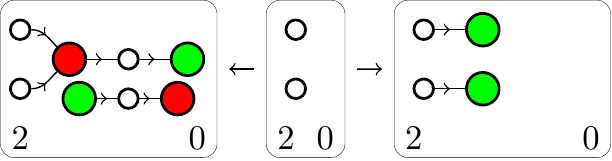}
			}
			\linebreak
			\subcaptionbox{Trivial spider}[0.45\textwidth]{%
				\includegraphics[scale=0.75]{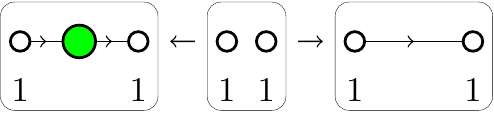}
			}
			\subcaptionbox{$\pi$-copy}[0.45\textwidth]{%
				\includegraphics[scale=0.75]{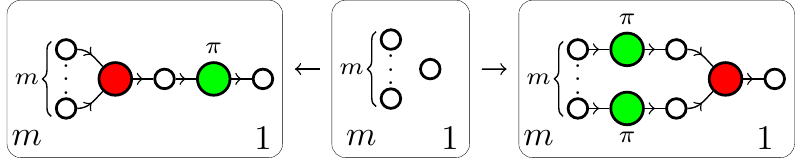}
			}
			\linebreak
			\subcaptionbox{$\pi$-commutation}[\textwidth]{%
				\includegraphics[scale=0.75]{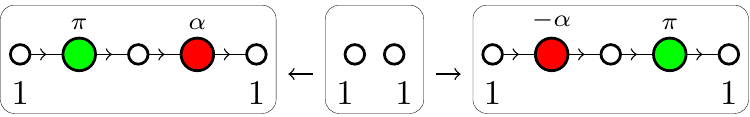}
			}
			\linebreak
			\subcaptionbox{Color change}[\textwidth]{%
				\includegraphics[scale=0.75]{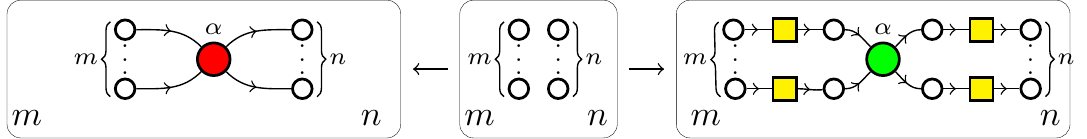}
			}
			\linebreak
			\subcaptionbox{Loop}[0.45\textwidth]{%
				\includegraphics[scale=0.75]{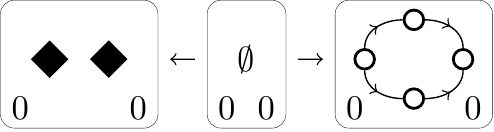}
			}
			\subcaptionbox{Diamond}[0.45\textwidth]{%
				\includegraphics[scale=0.75]{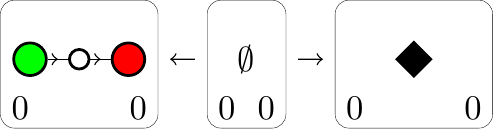}
			}
		\end{minipage}
	}
	\caption{Generating $2$-cells for the bicategory $\underline{\mathbf{zx}}$}
	\label{fig:ZX_2cells_generators}
\end{figure}

\begin{defn}
	\label{def:zx_bicat}
	Define $\underline{\mathbf{zx}}$ to be 
	the symmetric monoidal and 
	compact closed sub-bicategory 
	of $\mathbf{zxRewrite}$ generated by 
	the basic $1$-cells and basic $2$-cells.
\end{defn}

Working within $\cat{zxRewrite}$
allows us to generate 
$\underline{\mathbf{zx}}$
as an SMCC in this way.
Without having this ambient space,
we cannot be sure that we 
obtain an SMCC bicategory simply
by giving a presentation.

Because $\underline{\mathbf{zx}}$ is 
symmetric monoidal and compact closed, 
it contains twist, 
evaluation, and coevaluation 1-cells 
\begin{equation}
\label{diag:TwistCompact_1cells}
\raisebox{-0.8\height}{
	\includegraphics[scale=0.75]{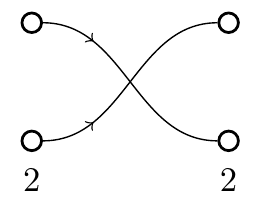}
	\quad \quad \quad
	\includegraphics[scale=0.75]{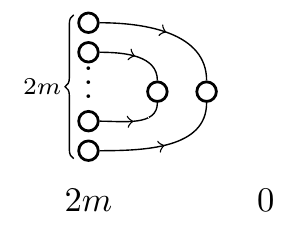}
	\quad \quad \quad
	\includegraphics[scale=0.75]{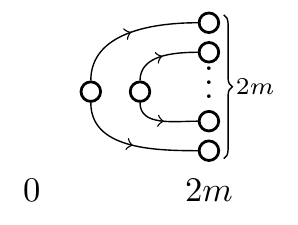}
}
\end{equation}
witnessing symmetry and 
the compact structure on $m$.

Horizontal and vertical composition 
are the same as in
	\ref{eq:Hor_and_Vert_Composition}. 
For example, 
we can compose spider diagrams 
with the same number 
of inputs and outputs:
\[
	\includegraphics{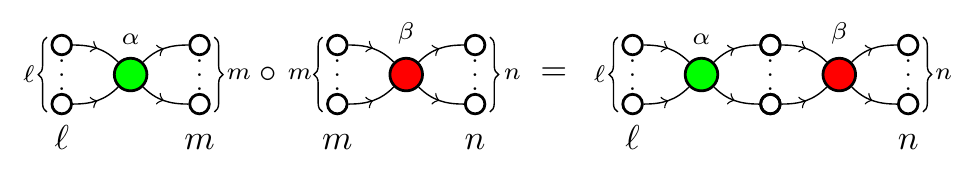}
\]
We tensor 1-cells by 
disjoint union, such as
\[
	\includegraphics{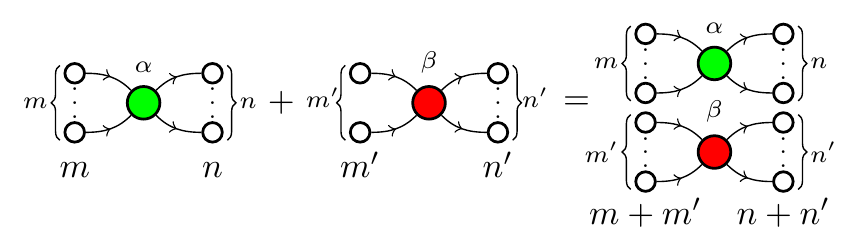}
\]
With $\underline{\mathbf{zx}}$ defined, 
we turn our focus towards 
presenting the main theorem. 
We start by giving a category that 
is a \emph{decategorification} 
or \emph{truncation} of $\underline{\mathbf{zx}}$.  

\begin{defn}
	\label{def:decat_zx}
	Define $|| \underline{\mathbf{zx}} ||$ 
	to be the category whose 
	objects are the 0-cells of $\underline{\mathbf{zx}}$ and 
	whose arrows are the 1-cells of $\underline{\mathbf{zx}}$ 
	modulo the equivalence relation $\sim$ 
	given by: 
	$f \sim g$ if and only if 
	there is a 2-cell $f \Rightarrow g$ 
	in $\underline{\mathbf{zx}}$.
\end{defn}

To be clear, $\sim$ 
is an equivalence relation 
and doesn't merely generate one.  
This follows from 
the symmetry of spans 
and vertical composition.   

\begin{thm}
	\label{thm:decat_zx_is_dagger_compact}
	The category $|| \underline{\mathbf{zx}} ||$ 
	is dagger compact via 
	the identity-on-objects functor $\dagger$
	given by 
	\[
	\begin{minipage}{0.4\textwidth}
		\includegraphics{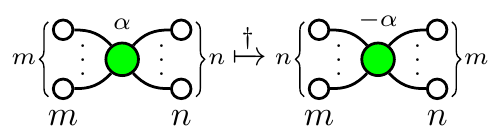}
	\end{minipage}
	\begin{minipage}{0.075\textwidth}
		\text{ and } 
	\end{minipage}
	\begin{minipage}{0.4\textwidth}
		\includegraphics{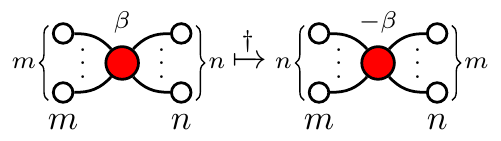}
	\end{minipage}
	\]
	as well as by 
	identity on the wire, 
	Hadamard, and diamond morphisms.
\end{thm}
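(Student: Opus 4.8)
The plan is to assemble the dagger compact structure from three ingredients: the compact closed structure that $||\underline{\mathbf{zx}}||$ inherits from the bicategory $\underline{\mathbf{zx}}$, the well-definedness of $\dagger$ on $\sim$-classes, and the compatibility of $\dagger$ with the compact and symmetric structure. First I would observe that $||\underline{\mathbf{zx}}||$ is a symmetric monoidal compact closed \emph{category}. By Definition \ref{def:zx_bicat}, $\underline{\mathbf{zx}}$ is a symmetric monoidal compact closed bicategory, so it carries the braiding, evaluation, and coevaluation 1-cells displayed in \eqref{diag:TwistCompact_1cells}. Truncating to $||\underline{\mathbf{zx}}||$ collapses each coherence 2-cell (associators, unitors, braiding 2-cells, and the snake 2-cells witnessing the compact structure) into an honest equality of morphisms, since any two 1-cells joined by a 2-cell are identified under $\sim$. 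Thus the monoidal product, braiding, and cup/cap morphisms descend and the triangle and snake identities hold on the nose, giving the compact closed category underlying the claim.

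The heart of the argument is defining $\dagger$ and checking it is well-defined. I would set $\dagger$ on the basic 1-cells exactly as stated (negate each spider phase and interchange the two legs of the cospan, i.e.\ swap inputs and outputs; act trivially on the wire, Hadamard, and diamond) and extend it contravariantly across composition and by the monoidal product; this reproduces the dagger on $\mathbf{zx}$ recorded in Section \ref{sec:ZxCalc}. To see that it descends to $\sim$-classes, I would use that $\sim$ is precisely $f \sim g$ iff there is a 2-cell $f \Rightarrow g$. A 2-cell is a span of cospans, and swapping the two cospan legs throughout, together with negating the phases of the $S_{\text{zx}}$-colouring, sends a span of cospans from $f$ to $g$ to one from $f^\dagger$ to $g^\dagger$. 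Because the generating 2-cells of Figure \ref{fig:ZX_2cells_generators} were declared closed under exactly this ``swapping inputs and outputs'' operation, this horizontal dagger carries basic 2-cells to basic 2-cells, hence generated 2-cells to generated 2-cells. Therefore $f \sim g$ implies $f^\dagger \sim g^\dagger$, so $\dagger$ is a well-defined map on morphisms.

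Functoriality (contravariance on composition, identity on objects) and involutivity then hold by construction: applying the leg-swap twice returns the original cospan and negating a phase twice returns the original phase, so $\dagger \circ \dagger = \mathrm{id}$, while contravariance is built into the definition. Strict monoidality $(f \otimes g)^\dagger = f^\dagger \otimes g^\dagger$ follows since disjoint union of graphs over $S_{\text{zx}}$ commutes with leg-swapping and phase negation. It remains to verify the dagger compact axioms: that the braiding is unitary and that the dagger of the coevaluation is the evaluation up to the braiding. Here I would compute directly with the explicit cup and coevaluation graphs over $S_{\text{zx}}$ from \eqref{diag:TwistCompact_1cells}, where swapping the input and output legs visibly interchanges evaluation with coevaluation and sends the symmetry 1-cell to its inverse.

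The step I expect to be the main obstacle is the well-definedness of $\dagger$ on $\sim$-classes, since everything else is either inherited from the bicategory or a direct computation on a handful of generating graphs. The crucial point to get right is that the \emph{horizontal} dagger (swapping inputs and outputs of a span of cospans) is genuinely distinct from the \emph{vertical} ``turning the spans around'' that makes $\sim$ symmetric; only the former implements $\dagger$ at the level of morphisms, and one must confirm that the declared closure of the basic 2-cells under input/output swapping really does force the generated 2-cells to be closed as well.
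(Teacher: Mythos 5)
Your proposal is correct and takes essentially the same route as the paper's proof: compact closedness is inherited from the SMCC structure of $\underline{\mathbf{zx}}$ via the evaluation and coevaluation 1-cells of \eqref{diag:TwistCompact_1cells}, with the witnessing 2-cells (including the wire relation \eqref{eq:2cell_wire_is_identity}, which the paper singles out for the snake equation) collapsing to equalities under $\sim$. The paper dismisses the dagger verification as ``easily verified details''; your well-definedness argument --- that the basic 2-cells are closed under the input/output swap, correctly distinguished from the span-reversal that makes $\sim$ symmetric --- is a sound filling-in of exactly those details.
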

\begin{proof}
	Compact closedness 
	follows from the self duality of objects 
	via the evaluation and coevaluation 
	maps from \eqref{diag:TwistCompact_1cells}. 
	The snake equation is derived by
	\[
	\includegraphics[scale=0.75]{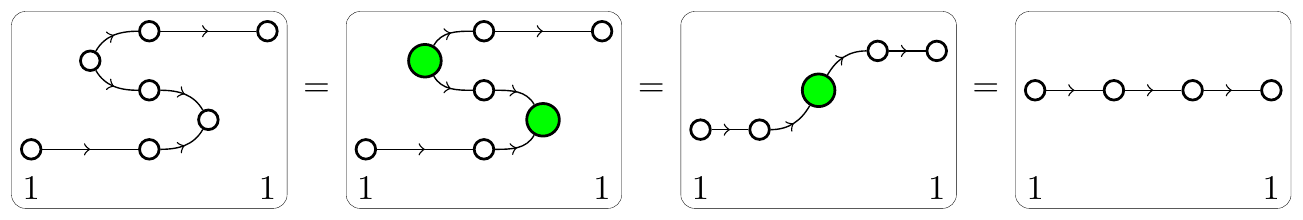}
	\]
	where the equalities 
	follow from the evident 
	2-cells in $\underline{\mathbf{zx}}$. 
	The extra relation 
		\eqref{eq:2cell_wire_is_identity} 
	ensures that the 
	string of wires is the identity.  
	Showing that $\dagger$
	is a dagger functor is a matter of 
	checking some easily verified details. 
\end{proof}

\begin{thm}
	\label{thm:equiv_of_zx_cats}
	The identity on objects, 
	dagger compact functor 
		$E \colon \mathbf{zx} \to || \underline{\mathbf{zx}} ||$ 
	given by
	\[
	\begin{minipage}{0.175\textwidth}
		\includegraphics{InclGrphx--generater--green_spider}
	\end{minipage}
		\mapsto
	\begin{minipage}{0.2\textwidth}
		\includegraphics{InclGrphx--1cell--zx_green_spider}
	\end{minipage}
	\quad \quad \quad 
	\begin{minipage}{0.175\textwidth}
		\includegraphics{InclGrphx--generater--red_spider}
	\end{minipage}
	\mapsto
	\begin{minipage}{0.2\textwidth}
		\includegraphics{InclGrphx--1cell--zx_red_spider}
	\end{minipage}
	\]
	\[
	\begin{minipage}{0.1\textwidth}
		\includegraphics{InclGrphx--generater--wire}
	\end{minipage}
	\mapsto
	\begin{minipage}{0.15\textwidth}
		\includegraphics{InclGrphx--1cell--zx_wire}
	\end{minipage}
	\quad \quad \quad 
	\begin{minipage}{0.04\textwidth}
		\includegraphics{InclGrphx--generater--diamond}
	\end{minipage}
	\mapsto
	\begin{minipage}{0.06\textwidth}
		\includegraphics{InclGrphx--1cell--zx_diamond}
	\end{minipage}
	\quad \quad \quad 
	\begin{minipage}{0.1\textwidth}
		\includegraphics{InclGrphx--generater--hadamard}
	\end{minipage}
	\mapsto
	\begin{minipage}{0.1\textwidth}
		\includegraphics{InclGrphx--1cell--zx_hadamard}
	\end{minipage}
	\]
	is an equivalence of categories.
\end{thm}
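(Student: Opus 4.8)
The plan is to show that $E$ is essentially surjective, full, and faithful. Since $E$ is the identity on objects and $\mathbf{zx}$ and $|| \underline{\mathbf{zx}} ||$ share the non-negative integers as objects, $E$ is a bijection on objects and hence essentially surjective. Before addressing the remaining two properties, I would first confirm that $E$ is a well-defined functor. As $\mathbf{zx}$ is presented by the generators of Figure \ref{fig:ZX_generators} subject to the basic relations of Figure \ref{fig:ZX_equations}, it suffices to check that $E$ carries each basic relation to an equality. This is precisely the content of the basic 2-cells of Figure \ref{fig:ZX_2cells_generators}: each basic relation $r = r'$ is witnessed by a basic 2-cell $E(r) \Rightarrow E(r')$, so $E(r) \sim E(r')$ by Definition \ref{def:decat_zx}. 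The extra relation \eqref{eq:2cell_wire_is_identity} handles the wire 1-cell, and $E$ preserves the symmetric monoidal and compact structures by construction, since it sends the braiding, evaluation, and coevaluation of $\mathbf{zx}$ to their counterparts in \eqref{diag:TwistCompact_1cells}; compatibility of the daggers is exactly Theorem \ref{thm:decat_zx_is_dagger_compact}.

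Fullness is a consequence of how $\underline{\mathbf{zx}}$ is generated. By Definition \ref{def:zx_bicat} every 1-cell is obtained from the basic 1-cells of Figure \ref{fig:ZX 1cells_generators} by horizontal composition, tensor, and the structural 1-cells \eqref{diag:TwistCompact_1cells}. As each basic 1-cell equals $E$ of a generating $\mathbf{zx}$-morphism and the structural 1-cells equal $E$ of the braiding, evaluation, and coevaluation in $\mathbf{zx}$, every arrow of $|| \underline{\mathbf{zx}} ||$ lies in the image of $E$.

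The heart of the argument is faithfulness: I must show that $E(f) \sim E(g)$ implies $f = g$ in $\mathbf{zx}$. Unwinding Definition \ref{def:decat_zx}, the hypothesis is that some 2-cell $E(f) \Rightarrow E(g)$ exists in $\underline{\mathbf{zx}}$. Two facts drive the conclusion. First, because the hom-categories are groupoids (the remark following Theorem \ref{thm:SpCspC_is_SMCC_bicategory}) and $\underline{\mathbf{zx}}$ is \emph{generated by}, rather than 2-full on, the basic 2-cells, such a 2-cell exists only when $E(f)$ and $E(g)$ are linked by a finite zigzag of basic 2-cells, each whiskered into context by horizontal composition and tensor. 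Second, each basic 2-cell is by construction the span-of-cospans presentation of a single basic relation of $\mathbf{zx}$. Transporting the zigzag back along the translation of Section \ref{sec:OpenGraphsOverSzx} thus produces a derivation of $f = g$ from the basic relations, which is exactly equality in $\mathbf{zx}$. In effect, the congruence $\sim$ on 1-cells generated by the basic 2-cells coincides with the congruence that the basic relations impose on $\mathbf{zx}$, so $E$ is a bijection on hom-sets.

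I expect faithfulness to be the main obstacle, since it is here that one must rule out any spurious identifications introduced by passing to open graphs and then to parallel classes of spans of cospans. There are two points to secure. The topological one is that string diagrams equal in the free symmetric monoidal (dagger compact) category must correspond to isomorphic open graphs over $S_{\text{zx}}$, so that the structural moves of $\mathbf{zx}$ are matched by invertible 2-cells and nothing further; this rests on the faithful graphical encoding of such categories in the style of Joyal--Street and Selinger \cite{JoyalStreet_GeomTensorCalc,Selinger_GraphicsMonCats}. The combinatorial one is that, although the parallel-class quotient discards the internal data of a rewrite, it never connects two 1-cells not already joined by a composite of basic 2-cells; the deliberate choice of an edgeless apex for the basic 2-cells, discussed after Definition \ref{def:zx_bicat}, is what pins this down. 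Establishing these two points is what makes faithfulness, rather than the other three properties, the substantive part of the proof.
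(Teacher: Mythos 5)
Your proposal is correct and takes essentially the same route as the paper: essential surjectivity from $E$ being identity-on-objects, fullness from the generating morphisms of $|| \underline{\mathbf{zx}} ||$ lying in the image of $E$, and faithfulness by decomposing an arbitrary 2-cell $E(f) \Rightarrow E(g)$ of $\underline{\mathbf{zx}}$ into basic 2-cells combined by horizontal composition, vertical composition, and tensor, each basic 2-cell corresponding to a basic relation of $\mathbf{zx}$, so that the decomposition transports back to a derivation of $f = g$. The paper merely formalizes your zigzag/whiskering step as an induction on the length of such a decomposition (using fullness to split domains and codomains in the inductive step), which is the same argument in substance.
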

\begin{proof}
	That $E$ is identity-on-objects
	implies essential surjectivity.  
	Fullness holds because 
	the generating morphisms for 
	$|| \underline{\mathbf{zx}} ||$  
	are all in the image of $E$. 
	
	Proving faithfulness is more involved. 
	Let $f$ and $g$ be $\mathbf{zx}$-morphisms. 
	Consider representatives
	$\widetilde{Ef}$, $\widetilde{Eg}$
	of $Ef$, $Eg$ obtained by 
	translating directly
	the graphical representation of $f,g$ 
	to open graphs over $S_{\text{zx}}$ 
	as in Examples \ref{ex:graph_over_Szx}  
	and \ref{ex:open_graph_over_Szx}. 
	It suffices to show that the 
	existence of a 2-cell 
		$\widetilde{Ef} \Rightarrow \widetilde{Eg}$ 
	in $\underline{\mathbf{zx}}$ 
	implies that $f=g$.  
	
	Observe that any 
	2-cell $\alpha$ in 
	$\underline{\mathbf{zx}}$ 
	can be written, 
	not necessarily uniquely, 
	as a sequence 
		$\alpha_1 \square \dotsm \square \alpha_n$ 
	where each $\alpha_i$ is a basic 2-cell, 
	each box is filled with 
	`$\circ_\text{h}$', `$\circ_\text{v}$', or `$+$', 
	and parentheses are right justified.  
	By `$\circ_\text{h}$' and `$\circ_\text{v}$', 
	we mean horizontal and vertical composition. 
	We induct on sequence length.  
	If $\alpha \colon \widetilde{Ef} \Rightarrow \widetilde{Eg}$ 
	is a basic 2-cell, 
	then there is clearly 
	a corresponding basic relation 
	equating $f$ and $g$.  
	Suppose we have a sequence 
	of length $n+1$ such that 
	the left-most square is a `$+$'. 
	Then we have a 2-cell 
	$\alpha_1 + \alpha_2 \colon Ef \Rightarrow Eg$ 
	where $\alpha_1$ is a basic 2-cell 
	and $\alpha_2$ can be 
	written with length $n$.   
	By fullness, we can write 
	$\alpha_1 + \alpha_2 \colon Ef_1 + EF_2 \Rightarrow Eg_1 + Eg_2$ 
	where $\alpha_i \colon Ef_i \Rightarrow Eg_i$.   
	This gives that $f_i = g_i$  
	and the result follows.  
	A similar argument handles 
	the cases when the 
	left-most operation is 
	vertical or horizontal composition.
\end{proof}
 
%
%

\section{Conclusion}
\label{sec:Conclusion}

The main advantage of 
fitting the zx-calculus 
into a bicategory is that 
the rewrite rules are now 
explicitly included
into the mathematical structure.
That is, we are now capturing
a larger portion
of the full picture that is
the zx-calculus. 

However, categorifying the
zx-calculus is only
part of the story.
The methods used here
are general
with only slight tweaks
made to accommodate the
case at hand.  
Indeed, similarly to how
we use $\cat{zxRewrite}$
to capture zx-diagrams,
we can construct modified versions
of $\cat{Rewrite}$ to frame 
open Markov chains, resistor networks,
internal Frobenius algebras, etc
into this framework.

\appendix
%
%

\section{A symmetric monoidal bicategory of spans of cospans}
\label{sec:SMCC_Bicat_SpCsp}

In this section, 
we prove Theorem 
	\ref{thm:SpCspC_is_SMCC_bicategory}. 
Let $\cat{C} = (\cat{C}_0, \otimes, I)$ be 
a finitely complete and cocomplete 
(braided, symmetric) monoidal category 
such that $\otimes$ preserves colimits. 
The category $\cat{Graph}$ together
with its coproduct is one example.
The proof consists of two parts: 
that $\spcsp{C}$ is a (braided, symmetric) 
monoidal bicategory and 
that it is compact closed.

We first show $\spcsp{C}$ 
is a (braided, symmetric) 
monoidal bicategory 
with a result from Shulman. 

\begin{thm}{\cite[Theorem 5.1]{Shulman_ConstructSMBicats}}
	\label{thm:DoubleGivesBi}
	Let $\dblcat{D}$ be an 
	isofibrant (braided, symmetric) 
	monoidal double category. 
	There is a (braided, symmetric) 
	monoidal bicategory $\cat{D}$ 
	whose objects are those of $\dblcat{D}$ 
	and whose hom-categories $\cat{D}(x,y)$ 
	have as 
	objects the horizontal arrows 
	in $\dblcat{D}$ of type $x \to y$ 
	and as morphisms the 
	2-cells in $\dblcat{D}$ of type
	\[
	\begin{tikzpicture}
		\node (1) at (0,1) {$x$};
		\node (2) at (1,1) {$y$};
		\node (3) at (1,0) {$y$};
		\node (4) at (0,0) {$x$};
		\node at (0.5,0.5) {$\Downarrow$};
		\draw [->] (1) to (2);
		\draw [->] (1) to node[left] {\scriptsize id} (4);
		\draw [->] (2) to node[right] {\scriptsize id} (3);
		\draw [->] (4) to (3);
	\end{tikzpicture}
	\]
\end{thm}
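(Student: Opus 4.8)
The plan is to realize $\cat{D}$ as the \emph{horizontal bicategory} of $\dblcat{D}$ and to transport the monoidal structure across this construction using the calculus of companions afforded by isofibrancy. First I would verify that the stated data—objects and horizontal arrows of $\dblcat{D}$, together with the globular squares (those whose vertical boundaries are identities) as 2-cells—indeed assemble into a bicategory $\cat{D}$. Horizontal composition of 1-cells is horizontal composition in $\dblcat{D}$, vertical composition of 2-cells is vertical composition of squares, whiskering is given by the square-composition of the double category, and the associativity and unit constraints of $\dblcat{D}$ furnish the associator and unitors of $\cat{D}$ along with their coherence. This underlying-horizontal-bicategory assignment $\dblcat{D} \mapsto \cat{D}$ is functorial, and crucially it preserves finite products: a globular square in $\dblcat{D} \times \dblcat{E}$ is just a pair of globular squares, so the horizontal bicategory of $\dblcat{D} \times \dblcat{E}$ is $\cat{D} \times \cat{E}$.

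Next I would recall that a (braided, symmetric) monoidal double category is precisely a (symmetric) pseudomonoid in the appropriate ambient structure: it carries a tensor double functor $\otimes \from \dblcat{D} \times \dblcat{D} \to \dblcat{D}$, a unit object $I$, and associativity, unit (and braiding) constraints given by \emph{vertical} natural isomorphisms satisfying pentagon, triangle (and hexagon) coherences. Applying the horizontal bicategory functor to $\otimes$ and $I$ already yields the tensor functor and unit for $\cat{D}$. The subtlety is the constraints: in $\dblcat{D}$ the associator is a vertical isomorphism, whereas in $\cat{D}$ it must be a 1-cell realized as an adjoint equivalence—that is, a horizontal arrow. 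This is exactly where isofibrancy enters. Since $\dblcat{D}$ is isofibrant, every vertical isomorphism has a companion horizontal arrow, and the companion of a vertical isomorphism is an equivalence. I would therefore define the bicategorical associator, left and right unitors (and braiding) to be the companions of the corresponding vertical constraint isomorphisms, extracting their pseudo-naturality 2-cells from the naturality squares of the vertical transformations by the companion calculus. In slogan form: isofibrancy upgrades the product-preserving functor $\cat{H}$ so that it sends (symmetric) pseudomonoids of double categories to (symmetric) pseudomonoids of bicategories, the latter being exactly (symmetric) monoidal bicategories.

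The heart of the argument is then to produce the invertible modifications of a monoidal bicategory—the pentagonator and the unit 2-cells (and, in the braided and symmetric cases, the two hexagonators and the syllepsis)—and to verify the monoidal-bicategory axioms. Here I would lean on the fact that companions and conjoints come equipped with canonical coherent comparison isomorphisms relating the companion of a composite to the composite of companions, and the companion of an identity to an identity. Because the pentagon, triangle, hexagon, and symmetry identities hold as genuine \emph{equations} of vertical transformations in $\dblcat{D}$, these canonical comparisons assemble into the required modifications, with invertibility automatic. The main obstacle is not any single conceptual step but the sheer volume of coherence bookkeeping: a symmetric monoidal bicategory carries a great deal of data and many axioms, and checking that the companion-derived modifications satisfy all of them is laborious. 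The companion and conjoint calculus is what makes this tractable, reducing each bicategorical axiom to a string-diagram identity of squares in $\dblcat{D}$ that follows from the matching double-categorical coherence; arranging the argument so that the functoriality and product-preservation of the horizontal bicategory construction carry the structural load is the key to keeping the proof manageable.
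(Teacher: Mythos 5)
This statement is not proved in the paper at all: it is imported verbatim from Shulman (the citation \cite[Theorem 5.1]{Shulman_ConstructSMBicats} is the entire ``proof''), and the paper merely applies it, via Lemmas \ref{lem:SpanCospanDoubleCat}--\ref{lem:SpanCospanIsofibrant}, to the double category $\dblspcsp{C}$. Your outline is a faithful reconstruction of Shulman's actual argument --- the horizontal bicategory of an isofibrant double category, monoidal constraints obtained as companions of the vertical constraint isomorphisms (which, being companions of vertical isomorphisms, are adjoint equivalences), product preservation of the horizontal-bicategory construction, and the pentagonator, hexagonators, and syllepsis assembled from the canonical companion-comparison cells because the double-categorical axioms hold as strict equations --- so it takes essentially the same approach as the cited source.
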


The same paper 
	\cite{Shulman_ConstructSMBicats} 
also contains the definitions used in this section.  
To use this theorem, we begin construction 
on a double category $\dblspcsp{C}$. 
This requires \emph{cubical spans of cospans}, 
which are commuting diagrams of shape 
\[
\begin{tikzpicture}
	\node (A) at (0,2) {$\bullet$};
	\node (A') at (0,1) {$\bullet$};
	\node (A'') at (0,0) {$\bullet$};
	\node (B) at (1,2) {$\bullet$};
	\node (B') at (1,1) {$\bullet$};
	\node (B'') at (1,0) {$\bullet$};
	\node (C) at (2,2) {$\bullet$};
	\node (C') at (2,1) {$\bullet$};
	\node (C'') at (2,0) {$\bullet$};
	\path[->,font=\scriptsize,>=angle 90]
	(A) edge node{} (B)
	(A') edge node{} (B')
	(A'') edge node{} (B'')
	(C) edge node{} (B)
	(C') edge node{} (B')
	(C'') edge node{} (B'')
	(A') edge node{} (A)
	(A') edge node{} (A'')
	(B') edge node{} (B)
	(B') edge node{} (B'')
	(C') edge node{} (C)
	(C') edge node{} (C'');
\end{tikzpicture}
\]
We get an equivalence relation on these 
by relating cubical spans of cospans 
that share the same outside square.
The induced classes are called
\emph{parallel classes}.

\begin{lem}
	\label{lem:SpanCospanDoubleCat}
	There is a double category $\dblspcsp{C}$ 
	whose objects are the $\cat{C}$-objects, 
	vertical morphisms are given by 
	isomorphism classes of spans in $\cat{C}$ with invertible legs, 
	horizontal morphisms are given by cospans in $\cat{C}$, 
	and 2-morphisms are parallel classes of 
	cubical spans of cospans in $\cat{C}$. 
\end{lem}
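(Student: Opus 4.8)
The plan is to recognize $\dblspcsp{C}$ as a pseudo double category, that is, an internal pseudocategory in $\cat{Cat}$ that is strict in the vertical direction and weak in the horizontal one, and to produce it by exhibiting its category of objects $\dblcat{D}_0$, its category of arrows $\dblcat{D}_1$, together with source, target, unit, and horizontal composition functors, and then checking the internal-category axioms. Finite completeness of $\cat{C}$ supplies the pullbacks used for vertical composition and finite cocompleteness supplies the pushouts used for horizontal composition. Since the horizontal 1-morphisms are honest cospans rather than isomorphism classes, their composition via pushout will only be weakly associative and unital, which is exactly the weak direction feeding into Theorem \ref{thm:DoubleGivesBi}.

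For the vertical category $\dblcat{D}_0$, I take the $\cat{C}$-objects as objects and isomorphism classes of spans with invertible legs as morphisms, composed by pullback. Because both legs of such a span are invertible, each class is determined by the single isomorphism obtained by composing one leg with the inverse of the other; hence $\dblcat{D}_0$ is isomorphic to the core groupoid of $\cat{C}$ and is strictly associative and unital once we pass to isomorphism classes, with the pullback composite inducing precisely the composite isomorphism. For $\dblcat{D}_1$, the objects are cospans and the morphisms are parallel classes of cubical spans of cospans, composed vertically by pulling back the two apex legs over their shared apex. The source and target functors read off the left and right vertical spans of a cube; well-definedness on parallel classes and strict associativity of this vertical composition both follow from the defining feature of a parallel class, namely that it is pinned down by its outer square.

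The unit functor $U \colon \dblcat{D}_0 \to \dblcat{D}_1$ sends an object $a$ to the identity cospan $a \to a \gets a$ and a vertical span to the evident identity cube. Horizontal composition $\odot$ is given by pushout: on cospans it is the usual composite $a \to m +_b m' \gets c$, and on 2-cells it pushes out the middle apex cospans accordingly. Existence of these pushouts is guaranteed by cocompleteness, and functoriality of $\odot$ on parallel classes is immediate, since the composite cube is again determined by its boundary. The associator and the left and right unitors are the special (globular) invertible 2-cells supplied by the universal property of pushouts, and their pentagon and triangle coherence reduce to the uniqueness-up-to-unique-isomorphism of iterated pushouts.

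The one axiom that genuinely requires comment is interchange, comparing the pullback-based vertical and pushout-based horizontal compositions on a $2 \times 2$ array of cubes. In general this would demand comparing a pushout of pullbacks with a pullback of pushouts, but here the comparison is free: because every 2-cell is a parallel class, and two parallel classes coincide as soon as their outer squares agree, both iterated composites of the array are cubical spans of cospans with the same outer boundary and are therefore equal. Thus interchange, and indeed the bulk of the remaining verification, collapses to checking that boundaries match. I expect the main obstacle to be not any single deep step but the careful bookkeeping needed to confirm that forming the two kinds of equivalence classes, isomorphism classes of spans vertically and parallel classes of cubes as 2-cells, is compatible with every source, target, unit, and composition functor, so that the whole assembles into a bona fide pseudo double category of the sort required by Theorem \ref{thm:DoubleGivesBi}.
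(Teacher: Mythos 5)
Your proposal is correct and takes essentially the same approach as the paper: you build the object and arrow categories, the unit/source/target functors, and pushout-based horizontal composition, and you dispose of interchange by observing that both composites of a $2\times 2$ array of cubes have the same outer boundary and hence define the same parallel class --- which is precisely the argument the paper isolates in its separate lemma on $\odot$ preserving composition. Your identification of the vertical category with the core groupoid of $\cat{C}$ is a small extra observation not made in the paper, but it does not alter the structure of the argument.
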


\begin{proof}
	Define the object category 
	$\widetilde{\dblcat{C}}_0$ 
	to have as objects 
	the $\cat{C}$-objects and 
	as morphisms the isomorphism classes of 
	spans in $\cat{C}$ with invertible legs. 
	Define the arrow category 
	$\widetilde{\dblcat{C}}_1$ 
	to have as objects the 
	cospans in $\cat{C}$ and 
	as morphisms the 
	parallel classes of 
	cubical spans of cospans 
	in $\cat{C}$.  
	
	The structure functor 
		$U \from \widetilde{\dblcat{C}}_0 \to \widetilde{\dblcat{C}}_1$ 
	acts on objects by 
	mapping $x$ to the identity cospan on $x$ 
	and on morphisms by 
	mapping $x \gets y \to z$, whose legs are isomorphisms, 
	to 
	\[
	\begin{tikzpicture}
	\node (A) at (0,2) {$x$};
	\node (A') at (0,1) {$x$};
	\node (A'') at (0,0) {$x$};
	\node (B) at (1,2) {$y$};
	\node (B') at (1,1) {$y$};
	\node (B'') at (1,0) {$y$};
	\node (C) at (2,2) {$z$};
	\node (C') at (2,1) {$z$};
	\node (C'') at (2,0) {$z$};
	\path[->,font=\scriptsize,>=angle 90]
	(A) edge node{} (B)
	(A') edge node{} (B')
	(A'') edge node{} (B'')
	(C) edge node{} (B)
	(C') edge node{} (B')
	(C'') edge node{} (B'')
	(A') edge node{} (A)
	(A') edge node{} (A'')
	(B') edge node{} (B)
	(B') edge node{} (B'')
	(C') edge node{} (C)
	(C') edge node{} (C'');
	\end{tikzpicture}
	\]
	The source functor 
		$S \from \widetilde{\dblcat{C}}_1 \to \widetilde{\dblcat{C}}_0$ 
	acts on objects by sending 
	$x \to y \gets z$ to $x$ and 
	on morphisms by sending 
	a cubical span of cospans to the span 
	occupying the left vertical side.  
	The target functor $T$ is defined similarly.  
	
	The horizontal composition functor 
		$\odot \from 
			\widetilde{\dblcat{C}}_1 \times_{\widetilde{\dblcat{C}}_0} \widetilde{\dblcat{C}}_1 \to \widetilde{\dblcat{C}}_1$ 
	acts on objects by 
	composing cospans with pushouts 
	in the usual way.  
	It acts on morphisms by 
	\[
	\raisebox{-0.5\height}{
		\begin{tikzpicture}
		\node (A) at (0,2) {$a$};
		\node (A') at (0,1) {$a'$};
		\node (A'') at (0,0) {$a''$};
		\node (B) at (1,2) {$b$};
		\node (B') at (1,1) {$b'$};
		\node (B'') at (1,0) {$b''$};
		\node (C) at (2,2) {$c$};
		\node (C') at (2,1) {$c'$};
		\node (C'') at (2,0) {$c''$};
		\node (D) at (3,2) {$d$};
		\node (D') at (3,1) {$d'$};
		\node (D'') at (3,0) {$d''$};
		\node (E) at (4,2) {$e$};
		\node (E') at (4,1) {$e'$};
		\node (E'') at (4,0) {$e''$};
		\path[->,font=\scriptsize,>=angle 90]
		(A) edge node[above]{} (B)
		(A') edge node[above]{} (B')
		(A'') edge node[above]{} (B'')
		(C) edge node[above]{} (B)
		(C') edge node[above]{} (B')
		(C'') edge node[above]{} (B'')
		(C) edge node[above]{} (D)
		(C') edge node[above]{} (D')
		(C'') edge node[above]{} (D'')
		(E) edge node[above]{} (D)
		(E') edge node[above]{} (D')
		(E'') edge node[above]{} (D'')
		(A') edge node[left]{} (A)
		(A') edge node[left]{} (A'')
		(B') edge node[left]{} (B)
		(B') edge node[left]{} (B'')
		(C') edge node[left]{} (C)
		(C') edge node[left]{} (C'')	
		(D') edge node[left]{} (D)
		(D') edge node[left]{} (D'')
		(E') edge node[left]{} (E)
		(E') edge node[left]{} (E'');
		\end{tikzpicture}
	}
	\quad
	\xmapsto[]{\odot}
	\quad
	\raisebox{-0.5\height}{
		\begin{tikzpicture}
		\node (A) at (0,2) {$a$};
		\node (A') at (0,1) {$a'$};
		\node (A'') at (0,0) {$a''$};
		\node (B) at (1.5,2) {$b+_{c}d$};
		\node (B') at (1.5,1) {$b'+_{c'}d'$};
		\node (B'') at (1.5,0) {$b''+_{c''}d''$};
		\node (C) at (3,2) {$e$};
		\node (C') at (3,1) {$e'$};
		\node (C'') at (3,0) {$e''$};
		\path[->,font=\scriptsize,>=angle 90]
		(A) edge node[above]{} (B)
		(A') edge node[above]{} (B')
		(A'') edge node[above]{} (B'')
		(C) edge node[above]{} (B)
		(C') edge node[above]{} (B')
		(C'') edge node[above]{} (B'')
		(A') edge node[left]{} (A)
		(A') edge node[left]{} (A'')
		(B') edge node[left]{} (B)
		(B') edge node[left]{} (B'')
		(C') edge node[left]{} (C)
		(C') edge node[left]{} (C'');	
		\end{tikzpicture}
	}
	\]
	This respects identities and 
	we separate the proof that 
	$\odot$ respects composition 
	into the next lemma.  
	It is straightforward to check 
	that the required equations are satisfied.  
	The associators and unitors 
	arise from universal properties.  
\end{proof}

\begin{lem}
	The assignment $\odot$ from 
	Lemma \ref{lem:SpanCospanDoubleCat} 
	preserves composition. 
	In particular, $\odot$ is a functor.
\end{lem}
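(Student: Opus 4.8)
The task is to show that $\odot$ respects composition in the arrow category $\widetilde{\dblcat{C}}_1$, whose composition is vertical pasting of cubical spans of cospans (computed by pullbacks, as in \eqref{eq:Hor_and_Vert_Composition}). Thus what must be checked is an interchange law: horizontal composition by pushout commutes with vertical composition by pullback. The plan hinges on a single observation. Since the $2$-morphisms of $\dblspcsp{C}$ are \emph{parallel} classes --- two cubical spans of cospans being identified precisely when they share an outside square --- the only object of the $3 \times 3$ grid not recorded by the class is the central one. Because $\odot$ modifies only the middle column and vertical pasting modifies only the apex row, the two iterated composites I must compare can differ \emph{only} at that central object; everything on the boundary is forced to agree.

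Concretely, I would fix a $2 \times 2$ array of composable cubical spans of cospans $\alpha, \beta, \gamma, \delta$, with $\alpha$ having rows $a \to b \gets c$, $a' \to b' \gets c'$, and $a'' \to b'' \gets c''$, letting $\beta$ reuse $\alpha$'s right column $(c, c', c'')$ as its left column, letting $\gamma$ reuse $\alpha$'s bottom cospan as its top, and positioning $\delta$ to match $\beta$ and $\gamma$ along the evident shared span and cospan. Unwinding the formula for $\odot$ and the pullback rule for vertical pasting, one composite yields central object $(b' +_{c'} d') \times_{(b'' +_{c''} d'')} (b''' +_{c'''} d''')$, a pullback of pushouts, while the other yields $(b' \times_{b''} b''') +_{(c' \times_{c''} c''')} (d' \times_{d''} d''')$, a pushout of pullbacks. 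These two objects need not be isomorphic, but both are discarded when passing to parallel classes.

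It then remains to read off the four boundary pieces of each composite and confirm they coincide on the nose. The top cospan is $a \to b +_c d \gets e$ in both cases, since it is the pushout of the tops of $\alpha$ and $\beta$ and is untouched by the vertical steps; dually the bottom cospan is $a'''' \to b'''' +_{c''''} d'''' \gets e''''$ in both; and the left and right spans are the pullbacks $a' \times_{a''} a'''$ and $e' \times_{e''} e'''$, which $\odot$ leaves unchanged. Hence the outside squares agree and the composites are equal as parallel classes, giving functoriality of $\odot$. The main obstacle is purely the bookkeeping: one must track the induced boundary maps carefully and verify, via the universal properties of the pushouts on the top and bottom and the pullbacks on the two sides, that they are literally the same arrows in both composites. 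The genuine failure of pushouts and pullbacks to commute at the center --- the one place an interchange law would ordinarily break --- is exactly what the parallel-class quotient renders harmless.
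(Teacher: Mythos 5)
Your proof is correct and takes essentially the same approach as the paper: both arguments compute the two iterated composites explicitly, observe that they agree on the entire outer boundary while differing only at the central object (a pullback of pushouts, $(b'+_{c'}d') \times_{(b''+_{c''}d'')} (b'''+_{c'''}d''')$, versus a pushout of pullbacks, $(b'\times_{b''}b''')+_{(c'\times_{c''}c''')}(d'\times_{d''}d''')$), and conclude equality from the fact that parallel classes record only the outer square. The paper carries out exactly this computation, just with the labels $\alpha, \alpha', \beta, \beta'$ in place of your $\alpha, \beta, \gamma, \delta$, and likewise leaves the boundary bookkeeping implicit.
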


\begin{proof}
	Let $\alpha, \alpha^\prime, \beta, \beta^\prime$ 
	be the following 2-morphisms
	\[
	\begin{tikzpicture}
	\node () at (-0.75,1) {$\alpha =$};
	\node (A) at (0,2) {$a$};
	\node (A') at (0,1) {$a'$};
	\node (A'') at (0,0) {$\ell$};
	\node (B) at (1,2) {$b$};
	\node (B') at (1,1) {$b'$};
	\node (B'') at (1,0) {$m$};
	\node (C) at (2,2) {$c$};
	\node (C') at (2,1) {$c'$};
	\node (C'') at (2,0) {$n$};
	\path[->,font=\scriptsize,>=angle 90]
	(A) edge node[above]{$$} (B)
	(A') edge node[above]{$$} (B')
	(A'') edge node[above]{$$} (B'')
	(C) edge node[above]{$$} (B)
	(C') edge node[above]{$$} (B')
	(C'') edge node[above]{$$} (B'')
	(A') edge node[left]{$\cong$} (A)
	(A') edge node[left]{$\cong$} (A'')
	(B') edge[->] node[left]{} (B)
	(B') edge[->] node[left]{} (B'')
	(C') edge node[left]{$\cong$} (C)
	(C') edge node[left]{$\cong$} (C'');	
	\end{tikzpicture}
	\quad \quad
	\begin{tikzpicture}
	\node () at (-0.75,1) {$\alpha' =$};
	\node (A) at (0,2) {$c$};
	\node (A') at (0,1) {$c'$};
	\node (A'') at (0,0) {$n$};
	\node (B) at (1,2) {$d$};
	\node (B') at (1,1) {$d'$};
	\node (B'') at (1,0) {$p$};
	\node (C) at (2,2) {$e$};
	\node (C') at (2,1) {$e'$};
	\node (C'') at (2,0) {$q$};
	\path[->,font=\scriptsize,>=angle 90]
	(A) edge node[above]{$$} (B)
	(A') edge node[above]{$$} (B')
	(A'') edge node[above]{$$} (B'')
	(C) edge node[above]{$$} (B)
	(C') edge node[above]{$$} (B')
	(C'') edge node[above]{$$} (B'')
	(A') edge node[left]{$\cong$} (A)
	(A') edge node[left]{$\cong$} (A'')
	(B') edge[->] node[left]{} (B)
	(B') edge[->] node[left]{} (B'')
	(C') edge node[left]{$\cong$} (C)
	(C') edge node[left]{$\cong$} (C'');	
	\end{tikzpicture}
	\]
	\[
	\begin{tikzpicture}
	\node () at (-0.75,1) {$\beta =$};
	\node (A) at (0,2) {$\ell$};
	\node (A') at (0,1) {$v'$};
	\node (A'') at (0,0) {$v$};
	\node (B) at (1,2) {$m$};
	\node (B') at (1,1) {$w'$};
	\node (B'') at (1,0) {$w$};
	\node (C) at (2,2) {$n$};
	\node (C') at (2,1) {$x'$};
	\node (C'') at (2,0) {$x$};
	\path[->,font=\scriptsize,>=angle 90]
	(A) edge node[above]{$$} (B)
	(A') edge node[above]{$$} (B')
	(A'') edge node[above]{$$} (B'')
	(C) edge node[above]{$$} (B)
	(C') edge node[above]{$$} (B')
	(C'') edge node[above]{$$} (B'')
	(A') edge node[left]{$\cong$} (A)
	(A') edge node[left]{$\cong$} (A'')
	(B') edge[->] node[left]{} (B)
	(B') edge[->] node[left]{} (B'')
	(C') edge node[left]{$\cong$} (C)
	(C') edge node[left]{$\cong$} (C'');	
	\end{tikzpicture}
	\quad \quad
	\begin{tikzpicture}
	\node () at (-0.75,1) {$\beta' =$};
	\node (A) at (0,2) {$n$};
	\node (A') at (0,1) {$x'$};
	\node (A'') at (0,0) {$x$};
	\node (B) at (1,2) {$p$};
	\node (B') at (1,1) {$y'$};
	\node (B'') at (1,0) {$y$};
	\node (C) at (2,2) {$q$};
	\node (C') at (2,1) {$z'$};
	\node (C'') at (2,0) {$z$};
	\path[->,font=\scriptsize,>=angle 90]
	(A) edge node[above]{$$} (B)
	(A') edge node[above]{$$} (B')
	(A'') edge node[above]{$$} (B'')
	(C) edge node[above]{$$} (B)
	(C') edge node[above]{$$} (B')
	(C'') edge node[above]{$$} (B'')
	(A') edge node[left]{$\cong$} (A)
	(A') edge node[left]{$\cong$} (A'')
	(B') edge[->] node[left]{} (B)
	(B') edge[->] node[left]{} (B'')
	(C') edge node[left]{$\cong$} (C)
	(C') edge node[left]{$\cong$} (C'');	
	\end{tikzpicture}
	\]
	Our goal is to show that
	\begin{equation}
	\label{eq:InterchangeCspSpan}
	(\alpha \odot \alpha') \circ (\beta \odot \beta')
	=
	(\alpha \circ \beta) \odot (\alpha' \circ \beta').
	\end{equation}
	The left hand side 
	of this equation 
	corresponds to horizontal composition 
	before vertical composition.
	The right hand side corresponds
	to composing in the
	opposite order. 
	
	First, compute the left hand side
	of \eqref{eq:InterchangeCspSpan}. 
	Composing horizontally, 
	$\alpha \odot \alpha'$ 
	and $\beta \odot \beta'$ are, respectively,
	\[
	\begin{tikzpicture}
	\node (A) at (1,1) {$a$};
	\node (A') at (1,0) {$a'$};
	\node (A'') at (1,-1) {$\ell$};
	\node (B) at (3,1) {$b+_{c}d$};
	\node (B') at (3,0) {$b'+_{c'}d'$};
	\node (B'') at (3,-1) {$m+_{n}p$};
	\node (C) at (5,1) {$e$};
	\node (C') at (5,0) {$e'$};
	\node (C'') at (5,-1) {$q$};
	\path[->,font=\scriptsize,>=angle 90]
	(A) edge node[above]{$$} (B)
	(A') edge node[above]{$$} (B')
	(A'') edge node[above]{$$} (B'')
	(C) edge node[above]{$$} (B)
	(C') edge node[above]{$$} (B')
	(C'') edge node[above]{$$} (B'')
	(A') edge node[left]{$\cong$} (A)
	(A') edge node[left]{$\cong$} (A'')
	(B') edge[->] node[left]{} (B)
	(B') edge[->] node[left]{} (B'')
	(C') edge node[left]{$\cong$} (C)
	(C') edge node[left]{$\cong$} (C'');	
	\end{tikzpicture}
	\quad \quad
	\begin{tikzpicture}
	\node (A) at (1,1) {$\ell$};
	\node (A') at (1,0) {$v'$};
	\node (A'') at (1,-1) {$v$};
	\node (B) at (3,1) {$m+_{n}p$};
	\node (B') at (3,0) {$w'+_{x'}y'$};
	\node (B'') at (3,-1) {$w+_{x}y$};
	\node (C) at (5,1) {$q$};
	\node (C') at (5,0) {$z'$};
	\node (C'') at (5,-1) {$z$};
	\path[->,font=\scriptsize,>=angle 90]
	(A) edge node[above]{$$} (B)
	(A') edge node[above]{$$} (B')
	(A'') edge node[above]{$$} (B'')
	(C) edge node[above]{$$} (B)
	(C') edge node[above]{$$} (B')
	(C'') edge node[above]{$$} (B'')
	(A') edge node[left]{$\cong$} (A)
	(A') edge node[left]{$\cong$} (A'')
	(B') edge[->] node[left]{} (B)
	(B') edge[->] node[left]{} (B'')
	(C') edge node[left]{$\cong$} (C)
	(C') edge node[left]{$\cong$} (C'');	
	\end{tikzpicture}
	\]
	The vertical composite
	$(\alpha \odot \alpha^\prime) \circ (\beta \odot \beta^\prime)$ 
	is equal to
	\begin{equation}
	\label{diag:IntrchngHorVertCspSpan}
	\raisebox{-0.5\height}{
		\begin{tikzpicture}
		\node (A) at (1,1) {$a$};
		\node (A') at (1,0) {$a'\times_{\ell}v'$};
		\node (A'') at (1,-1) {$v$};
		\node (B) at (5,1) {$b+_{d}d$};
		\node (B') at (5,0) {$(b'+_{c'}d') \times_{(m+_{n}p)} (w'+_{x'}y')$};
		\node (B'') at (5,-1) {$w+_{x}y$};
		\node (C) at (9,1) {$e$};
		\node (C') at (9,0) {$e' +_{q}z'$};
		\node (C'') at (9,-1) {$z$};
		\path[->,font=\scriptsize,>=angle 90]
		(A) edge node[above]{$$} (B)
		(A') edge node[above]{$$} (B')
		(A'') edge node[above]{$$} (B'')
		(C) edge node[above]{$$} (B)
		(C') edge node[above]{$$} (B')
		(C'') edge node[above]{$$} (B'')
		(A') edge node[left]{$\cong$} (A)
		(A') edge node[left]{$\cong$} (A'')
		(B') edge[->] node[left]{} (B)
		(B') edge[->] node[left]{} (B'')
		(C') edge node[left]{$\cong$} (C)
		(C') edge node[left]{$\cong$} (C'');	
		\end{tikzpicture}
	}
	\end{equation}
	
	Now solving for the right hand side of 
		\eqref{eq:InterchangeCspSpan},
	$\alpha \circ \beta$ and 
	$\alpha' \circ \beta'$ are
	respectively
	\[
	\begin{tikzpicture}
	\node (A) at (1,1) {$a$};
	\node (A') at (1,0) {$a' \times_{\ell}v'$};
	\node (A'') at (1,-1) {$v$};
	\node (B) at (3,1) {$b$};
	\node (B') at (3,0) {$b' \times_{m}w'$};
	\node (B'') at (3,-1) {$w$};
	\node (C) at (5,1) {$c$};
	\node (C') at (5,0) {$c' \times_{n}x'$};
	\node (C'') at (5,-1) {$x$};
	\path[->,font=\scriptsize,>=angle 90]
	(A) edge node[above]{$$} (B)
	(A') edge node[above]{$$} (B')
	(A'') edge node[above]{$$} (B'')
	(C) edge node[above]{$$} (B)
	(C') edge node[above]{$$} (B')
	(C'') edge node[above]{$$} (B'')
	(A') edge node[left]{$\cong$} (A)
	(A') edge node[left]{$\cong$} (A'')
	(B') edge[->] node[left]{} (B)
	(B') edge[->] node[left]{} (B'')
	(C') edge node[left]{$\cong$} (C)
	(C') edge node[left]{$\cong$} (C'');	
	\end{tikzpicture}
	\quad \quad 
	\begin{tikzpicture}
	\node (A) at (1,1) {$c$};
	\node (A') at (1,0) {$c' \times_{n}x'$};
	\node (A'') at (1,-1) {$x$};
	\node (B) at (3,1) {$d$};
	\node (B') at (3,0) {$d' \times_{p}y'$};
	\node (B'') at (3,-1) {$y$};
	\node (C) at (5,1) {$e$};
	\node (C') at (5,0) {$e' \times_{q}z'$};
	\node (C'') at (5,-1) {$z$};
	\path[->,font=\scriptsize,>=angle 90]
	(A) edge node[above]{$$} (B)
	(A') edge node[above]{$$} (B')
	(A'') edge node[above]{$$} (B'')
	(C) edge node[above]{$$} (B)
	(C') edge node[above]{$$} (B')
	(C'') edge node[above]{$$} (B'')
	(A') edge node[left]{$\cong$} (A)
	(A') edge node[left]{$\cong$} (A'')
	(B') edge[->] node[left]{} (B)
	(B') edge[->] node[left]{} (B'')
	(C') edge node[left]{$\cong$} (C)
	(C') edge node[left]{$\cong$} (C'');	
	\end{tikzpicture}
	\]
	The vertical composite 
	$(\alpha \circ \beta) \odot (\alpha^\prime \circ \beta^\prime)$ 
	is equal to 
	\begin{equation}
	\label{diag:IntrchngVertHorCspSpan}
	\raisebox{-0.5\height}{
		\begin{tikzpicture}
		\node (A) at (1,1) {$a$};
		\node (A') at (1,0) {$a' \times_{\ell}v'$};
		\node (A'') at (1,-1) {$v$};
		\node (B) at (5,1) {$b +_{c}d$};
		\node (B') at (5,0) {$(b'\times_{m}w')+_{(c'\times_{n}x')}(d'\times_{p}y')$};
		\node (B'') at (5,-1) {$w+_{x}y$};
		\node (C) at (9,1) {$e$};
		\node (C') at (9,0) {$e' \times_{q}z'$};
		\node (C'') at (9,-1) {$z$};
		\path[->,font=\scriptsize,>=angle 90]
		(A) edge node[above]{$$} (B)
		(A') edge node[above]{$$} (B')
		(A'') edge node[above]{$$} (B'')
		(C) edge node[above]{$$} (B)
		(C') edge node[above]{$$} (B')
		(C'') edge node[above]{$$} (B'')
		(A') edge node[left]{$\cong$} (A)
		(A') edge node[left]{$\cong$} (A'')
		(B') edge[->] node[left]{} (B)
		(B') edge[->] node[left]{} (B'')
		(C') edge node[left]{$\cong$} (C)
		(C') edge node[left]{$\cong$} (C'');	
		\end{tikzpicture}
	}
	\end{equation}
	
	Since \eqref{diag:IntrchngHorVertCspSpan} 
	and \eqref{diag:IntrchngVertHorCspSpan} 
	have coinciding outer squares, 
	they represent the same parallel class,
	hence \eqref{eq:InterchangeCspSpan} holds.
\end{proof}

Our next step is to show that 
the (braided, symmetric) monoidal structure 
from $\cat{C}$ lifts to $\dblspcsp{C}$.
We point to 
	\cite[Def.~2.9]{Shulman_ConstructSMBicats} 
for the definition of a monoidal double category.

\begin{lem}
	\label{lem:SpanCospanSM}
	The (braided, symmetric) monoidal structure 
	of $\cat{C}$ lifts to $\dblspcsp{C}$.
\end{lem}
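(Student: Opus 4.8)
The plan is to verify directly that the constructions in $\cat{C}$—namely the monoidal product $\otimes$, the unit $I$, and the braiding/symmetry when present—assemble into the structure functors required by Shulman's definition of a monoidal double category \cite[Def.~2.9]{Shulman_ConstructSMBicats}, and that all the coherence conditions hold. Concretely, a monoidal double category requires that the object category $\widetilde{\dblcat{C}}_0$ and the arrow category $\widetilde{\dblcat{C}}_1$ each carry a monoidal structure, and that the structure functors $U$, $S$, $T$, and $\odot$ are all strong monoidal, with the associated coherence data satisfying the pentagon and triangle axioms (plus hexagon axioms in the braided case).

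First I would define the monoidal product on $\dblspcsp{C}$ pointwise using $\otimes$. On objects this is immediate. On horizontal morphisms (cospans) $x \to y \gets z$ and $x' \to y' \gets z'$, I take the cospan $x \otimes x' \to y \otimes y' \gets z \otimes z'$; this is well-defined because $\otimes$ is a functor. On vertical morphisms (spans with invertible legs), the same recipe applies, and since $\otimes$ preserves isomorphisms the resulting legs remain invertible. On 2-morphisms (cubical spans of cospans) I again apply $\otimes$ entrywise to the nine-object diagram; I must check this descends to parallel classes, which it does because $\otimes$ sends a pair of cubes with matching outer squares to cubes with matching outer squares. The monoidal unit is the identity double-categorical data on $I$.

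Next I would produce the comparison cells witnessing that the structure functors $S$, $T$, $U$, and $\odot$ are strong monoidal. For $S$ and $T$ this is essentially trivial since they project onto a chosen side of the cube. For $U$, sending $x$ to the identity cospan, compatibility with $\otimes$ holds on the nose because $\id_{x \otimes x'} = \id_x \otimes \id_{x'}$. The one place requiring genuine care is $\odot$: I must exhibit a coherent isomorphism relating $(b +_c d) \otimes (b' +_{c'} d')$ to $(b \otimes b') +_{(c \otimes c')} (d \otimes d')$, i.e.\ that $\otimes$ commutes with the pushouts used in horizontal composition. This is exactly where the hypothesis that $\otimes$ \emph{preserves colimits} enters, and it is the main obstacle: I must show the canonical comparison map is an isomorphism and that it is natural and coherent with respect to the associativity and unit pushout isomorphisms.

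Finally I would verify the coherence axioms—associativity, unitality, and the braiding/symmetry hexagons—for the resulting monoidal double category. These all follow from the corresponding coherences in $\cat{C}$ transported through the pointwise construction, together with universal properties of the limits and colimits involved; since everything is built entrywise from $\otimes$, each axiom reduces to the matching axiom in $\cat{C}$ applied in each of the nine positions of the cube, and then checked to respect parallel classes by comparison of outer squares. I expect these verifications to be routine once the $\odot$-comparison isomorphism of the previous step is in hand, so the crux of the argument is the colimit-preservation check for horizontal composition.
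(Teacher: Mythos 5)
Your proposal is correct and follows essentially the same route as the paper: define $\otimes$ pointwise on the object and arrow categories of $\dblspcsp{C}$, obtain associators, unitors, and coherence from universal properties, and isolate the interchange comparison for $\odot$ as the one nontrivial step. In particular, you correctly identify that the isomorphism $(b +_c d) \otimes (b' +_{c'} d') \cong (b \otimes b') +_{(c \otimes c')} (d \otimes d')$ is exactly where the hypothesis that $\otimes$ preserves colimits is used, which is precisely how the paper constructs the globular $2$-cell $r$ (together with the unit comparison $u \from U_{f \otimes g} \to U_f \otimes U_g$, which holds on the nose there as well).
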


\begin{proof}
	Again, denote 
	$\dblspcsp{C}$ by $\widetilde{\dblcat{C}}$. 
	The object $\widetilde{\dblcat{C}}_0$ and 
	arrow $\widetilde{\dblcat{C}}_1$ categories are 
	(braided, symmetric) monoidal
	by taking $\otimes$ pointwise.  
	The monoidal structure for 
	$\widetilde{\dblcat{C}}_0$-objects follows 
	from that on $\cat{C}$ and 
	for $\widetilde{\dblcat{C}}_0$-morphisms is
	\[
	(a \gets b \to c) \otimes (a' \gets b' \to c')
	=
	(a\otimes a' \gets b\otimes b' \to c\otimes c').
	\]
	Universal properties provide 
	the associator, unitors, 
	and coherence axioms. 
	It is clear that $\widetilde{\dblcat{C}}_0$ is 
	also braided or symmetric monoidal 
	whenever $\cat{C}$ is.
	
	We obtain a monoidal structure 
	for $\widetilde{\dblcat{C}}_1$-objects by 
	\[
		(a \to b \gets c) \otimes (a' \to b' \gets c')
		=
		(a\otimes a' \to b\otimes b'  \gets c\otimes c')
	\]
	and for $\widetilde{\dblcat{C}}_1$-morphisms by
	\[
	\raisebox{-0.5\height}{
		\begin{tikzpicture}
		\node (A) at (0,2) {$\bullet$};
		\node (A') at (0,1) {$\bullet$};
		\node (A'') at (0,0) {$\bullet$};
		\node (B) at (1,2) {$\bullet$};
		\node (B') at (1,1) {$\bullet$};
		\node (B'') at (1,0) {$\bullet$};
		\node (C) at (2,2) {$\bullet$};
		\node (C') at (2,1) {$\bullet$};
		\node (C'') at (2,0) {$\bullet$};
		\path[->,font=\scriptsize,>=angle 90]
		(A) edge node[above]{} (B)
		(A') edge node[above]{} (B')
		(A'') edge node[above]{} (B'')
		(C) edge node[above]{} (B)
		(C') edge node[above]{} (B')
		(C'') edge node[above]{} (B'')
		(A') edge node[left]{} (A)
		(A') edge node[left]{} (A'')
		(B') edge[->] node[left]{} (B)
		(B') edge[->] node[left]{} (B'')
		(C') edge node[left]{} (C)
		(C') edge node[left]{} (C'');	
		\end{tikzpicture}
	}
	\otimes
	\raisebox{-0.5\height}{
		\begin{tikzpicture}
		\node (A) at (0,2) {$\ast$};
		\node (A') at (0,1) {$\ast$};
		\node (A'') at (0,0) {$\ast$};
		\node (B) at (1,2) {$\ast$};
		\node (B') at (1,1) {$\ast$};
		\node (B'') at (1,0) {$\ast$};
		\node (C) at (2,2) {$\ast$};
		\node (C') at (2,1) {$\ast$};
		\node (C'') at (2,0) {$\ast$};
		\path[->,font=\scriptsize,>=angle 90]
		(A) edge node[above]{} (B)
		(A') edge node[above]{} (B')
		(A'') edge node[above]{} (B'')
		(C) edge node[above]{} (B)
		(C') edge node[above]{} (B')
		(C'') edge node[above]{} (B'')
		(A') edge node[left]{} (A)
		(A') edge node[left]{} (A'')
		(B') edge[->] node[left]{} (B)
		(B') edge[->] node[left]{} (B'')
		(C') edge node[left]{} (C)
		(C') edge node[left]{} (C'');	
		\end{tikzpicture}
	}
	=
	\raisebox{-0.5\height}{
		\begin{tikzpicture}
		\node (A) at (0,2) {$\bullet\otimes \ast$};
		\node (A') at (0,1) {$\bullet\otimes \ast$};
		\node (A'') at (0,0) {$\bullet\otimes \ast$};
		\node (B) at (1.5,2) {$\bullet\otimes \ast$};
		\node (B') at (1.5,1) {$\bullet\otimes \ast$};
		\node (B'') at (1.5,0) {$\bullet\otimes \ast$};
		\node (C) at (3,2) {$\bullet\otimes \ast$};
		\node (C') at (3,1) {$\bullet\otimes \ast$};
		\node (C'') at (3,0) {$\bullet\otimes \ast$};
		\path[->,font=\scriptsize,>=angle 90]
		(A) edge node[above]{} (B)
		(A') edge node[above]{} (B')
		(A'') edge node[above]{} (B'')
		(C) edge node[above]{} (B)
		(C') edge node[above]{} (B')
		(C'') edge node[above]{} (B'')
		(A') edge node[left]{} (A)
		(A') edge node[left]{} (A'')
		(B') edge[>->] node[left]{} (B)
		(B') edge[>->] node[left]{} (B'')
		(C') edge node[left]{} (C)
		(C') edge node[left]{} (C'');	
		\end{tikzpicture}
	}
	\]
	The monoidal unit for $\widetilde{\dblcat{C}}_1$ 
	is the identity cospan on $I$ 
	which is exactly $U_I$. 
	Universal properties again provide 
	the associator, unitors, and coherence axioms. 
	The braiding and symmetry of the 
	monoidal structure clearly lifts from $\cat{C}$.
	
	It is straightforward to check
	that the source and target functors 
	are strict monoidal and respect the 
	associator and unitors. 
	It remains to find two 
	invertible globular 2-cells:
	one witnessing interchange 
	\[
		r \from 
			(M_1 \otimes N_1) \odot (M_2 \otimes N_2)
			\to 
			(M_1 \odot M_2) \otimes (N_1 \odot N_2)
	\]
	for $\widetilde{\dblcat{C}}_1$-objects $M_i$ and $N_i$, 
	and another witnessing units
	\[
		u \from U_{f \otimes g} \to U_f \otimes U_g 
	\]
	for $\widetilde{\dblcat{C}}_0$-arrows $f$ and $g$. 
	Moreover, $r$ and $u$ must satisfy 
	certain axioms 
	\cite[Def.~2.9]{Shulman_ConstructSMBicats}.	
	
	If $M_1 = (a \to b \gets c)$, 
	$M_2 = (c \to d \gets e)$, 
	$N_1 = (v \to w \gets x)$, and 
	$N_2 = (x \to y \gets z)$, 
	then $r$ has domain 
	\[
		a \otimes v 
		\to 
		(b \otimes w)+_{(c \otimes x)} (d \otimes y)
		\gets
		(e \otimes z)
	\]
	and codomain
	\[
		a \otimes v 
		\to 
		b+_c d \otimes w+_x y
		\gets
		(e \otimes z).
	\]
	We must find a 2-cell 
	whose outer square
	is formed by the 
	domain and codomain of $r$
	on the top and bottom
	plus identity $\widetilde{\dblcat{C}}_0$-morphisms
	on the left and right.  
	Let $J \from \cat{D} \to \cat{C} \times \cat{C}$ 
	be the functor on the category
	\[
		\cat{D} = \{ \bullet \to \bullet \gets \bullet \to \bullet \gets \bullet\}.
	\] 
	whose image is of the form
	\[
		(a \to b \gets c \to d \gets e ) \times (w \to v \gets x \to y \gets z ).
	\]
	Then the domain of $r$ is 
	$\colim (\Delta \circ \otimes)$ 
	and the codomain is 
	$\otimes \left( \colim (\Delta) \right)$. 
	But these are isomorphic by assumption. 
	This gives $r$.
	
	To define $u$, 
	let $f$ be the 
	$\widetilde{\dblcat{C}}_0$-morphism 
	$a \gets b \to c$ and 
	let $g$ be $x \gets y \to z$.  
	It is easy to check that 
	both $U_{f \otimes g}$ and $U_{f} \otimes U_g$ are
	\[
	\begin{tikzpicture}
		\node (A) at (0,2) {$a\otimes x$};
		\node (A') at (0,1) {$a\otimes x$};
		\node (A'') at (0,0) {$a\otimes x$};
		\node (B) at (2,2) {$b\otimes y$};
		\node (B') at (2,1) {$b\otimes y$};
		\node (B'') at (2,0) {$b\otimes y$};
		\node (C) at (4,2) {$c\otimes z$};
		\node (C') at (4,1) {$c\otimes z$};
		\node (C'') at (4,0) {$c\otimes z$};
		\path[->,font=\scriptsize,>=angle 90]
		(A) edge node[above]{} (B)
		(A') edge node[above]{} (B')
		(A'') edge node[above]{} (B'')
		(C) edge node[above]{} (B)
		(C') edge node[above]{} (B')
		(C'') edge node[above]{} (B'')
		(A') edge node[left]{} (A)
		(A') edge node[left]{} (A'')
		(B') edge node[left]{} (B)
		(B') edge node[left]{} (B'')
		(C') edge node[left]{} (C)
		(C') edge node[left]{} (C'');
	\end{tikzpicture}
	\]
	where the legs of the horizontal cospans 
	are built using the inverses 
	of the legs of $f$ and $g$. 
	The legs of the vertical spans are identities.
	
	As for the remaining axioms, 
	they are straightforward though tedious 
	to check and are left to the reader.
\end{proof}

\begin{lem}
	\label{lem:SpanCospanIsofibrant}
	$\dblspcsp{C}$ is isofibrant.  
\end{lem}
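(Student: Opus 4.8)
The plan is to verify Shulman's definition directly: a double category is isofibrant precisely when every vertical isomorphism admits both a companion and a conjoint \cite{Shulman_ConstructSMBicats}, and this is exactly the hypothesis Theorem \ref{thm:DoubleGivesBi} needs, so I must produce these data for $\dblspcsp{C}$. The first step is to observe that \emph{every} vertical morphism of $\dblspcsp{C}$ is already a vertical isomorphism. A vertical arrow $A \to B$ is an isomorphism class of spans $A \xleftarrow{s} S \xrightarrow{t} B$ with $s$ and $t$ invertible; choosing the representative with $S = A$ and $s = \mathrm{id}_A$ exhibits it as an honest isomorphism $\phi := t \colon A \to B$, and the reversed span $B \xleftarrow{t} S \xrightarrow{s} A$ supplies a two-sided vertical inverse (composition being by pullback, which is trivial when the legs are invertible). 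Hence it suffices to construct a companion and a conjoint for an arbitrary isomorphism $\phi \colon A \to B$.

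Next I would write the companion and conjoint down explicitly as cospans. For the companion $\hat\phi \colon A \to B$ I take the cospan $A \xrightarrow{\phi} B \xleftarrow{\mathrm{id}_B} B$, and for the conjoint $\check\phi \colon B \to A$ the cospan $B \xrightarrow{\mathrm{id}_B} B \xleftarrow{\phi} A$. The companion requires a pair of structure 2-cells: one square with horizontal boundary $(\hat\phi, U_B)$ and vertical boundary $(\phi, \mathrm{id}_B)$, and one with horizontal boundary $(U_A, \hat\phi)$ and vertical boundary $(\mathrm{id}_A, \phi)$, where $U_A, U_B$ are the unit cospans. I realize these as cubical spans of cospans whose horizontal cospans are the ones just named and whose vertical spans are assembled from identities and from $\phi$ together with its inverse; the conjoint data are entirely dual. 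Confirming that each composite has the prescribed boundary is a matter of unwinding the horizontal and vertical composition rules from \eqref{eq:Hor_and_Vert_Composition}.

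The crucial simplification, and the reason this lemma is painless, is that the companion equations (vertical composite $= U_\phi$, horizontal composite $= \mathrm{id}_{\hat\phi}$) and their conjoint analogues are equalities of 2-cells, and 2-cells in $\dblspcsp{C}$ are \emph{parallel classes}, determined entirely by their outer square. Thus, once I check that each side of a companion (resp.\ conjoint) identity carries the correct outer boundary, the identity holds automatically, with no need to compute apices or exhibit mediating isomorphisms. Consequently the only real work, and the one place to be careful, is bookkeeping: fixing the directions of $\hat\phi$ and $\check\phi$ and matching the outer squares of the composites against $\mathrm{id}_{\hat\phi}$, $\mathrm{id}_{\check\phi}$, and the unit 2-cells on $\phi$. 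I do not expect any genuine obstacle beyond this, since the very parallel-class quotient introduced to make $\dblspcsp{C}$ a double category is what trivializes the coherence demanded by isofibrancy.
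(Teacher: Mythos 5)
Your proof is correct and takes essentially the same route as the paper: there the companion of a vertical morphism $f = (a \gets b \to c)$ is the cospan obtained by inverting its (invertible) legs, the conjoint is the reversed cospan $\widehat{f}^{\text{op}}$, and the structure 2-cells are exactly the kind of cubes built from identities and the isomorphism you call $\phi$ (your normalization of the span to a representative with identity left leg is only a cosmetic choice within the isomorphism class). Your further observation that the companion and conjoint equations hold automatically, because 2-cells are parallel classes determined by their outer squares and the composites' boundaries reduce to those of $U_f$ and $\mathrm{id}_{\widehat{f}}$, is precisely the verification the paper leaves implicit with ``the reader may check that the required equations hold,'' so you have made its elided step explicit rather than diverged from it.
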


\begin{proof}
	Take a vertical morphism 
	$f = (a \gets b \to c)$. 
	The legs of the companion 
	$\widehat{f} = (a \to b \gets c)$, 
	are the inverses of those
	from $f$. 
	The companion is equipped with the 2-morphisms
	\[
	\raisebox{-0.5\height}{
		\begin{tikzpicture}
		\node (A) at (0,2) {$a$};
		\node (A') at (0,1) {$b$};
		\node (A'') at (0,0) {$c$};
		\node (B) at (1,2) {$b$};
		\node (B') at (1,1) {$c$};
		\node (B'') at (1,0) {$c$};
		\node (C) at (2,2) {$c$};
		\node (C') at (2,1) {$c$};
		\node (C'') at (2,0) {$c$};
		\path[->,font=\scriptsize,>=angle 90]
		(A) edge node[above]{} (B)
		(A') edge node[above]{} (B')
		(A'') edge node[above]{} (B'')
		(C) edge node[above]{} (B)
		(C') edge node[above]{} (B')
		(C'') edge node[above]{} (B'')
		(A') edge node[left]{} (A)
		(A') edge node[left]{} (A'')
		(B') edge node[left]{} (B)
		(B') edge node[left]{} (B'')
		(C') edge node[left]{} (C)
		(C') edge node[left]{} (C'');
		\end{tikzpicture}
	}
	\t{ and }
	\raisebox{-0.5\height}{
		\begin{tikzpicture}
		\node (A) at (0,2) {$a$};
		\node (A') at (0,1) {$a$};
		\node (A'') at (0,0) {$a$};
		\node (B) at (1,2) {$a$};
		\node (B') at (1,1) {$a$};
		\node (B'') at (1,0) {$b$};
		\node (C) at (2,2) {$a$};
		\node (C') at (2,1) {$b$};
		\node (C'') at (2,0) {$c$};
		\path[->,font=\scriptsize,>=angle 90]
		(A) edge node[above]{} (B)
		(A') edge node[above]{} (B')
		(A'') edge node[above]{} (B'')
		(C) edge node[above]{} (B)
		(C') edge node[above]{} (B')
		(C'') edge node[above]{} (B'')
		(A') edge node[left]{} (A)
		(A') edge node[left]{} (A'')
		(B') edge node[left]{} (B)
		(B') edge node[left]{} (B'')
		(C') edge node[left]{} (C)
		(C') edge node[left]{} (C'');
		\end{tikzpicture}
	}
	\]
	The reader may check that 
	the required equations hold.
	The conjoint $\check{f} $ of 
	$f$ is $\widehat{f}^{\text{op}}$. 
\end{proof}

\begin{thm}
	\label{thm:SpansCospasAreSMBicat}
	$\spcsp{C}$ is a (braided, symmetric) monoidal bicategory.
\end{thm}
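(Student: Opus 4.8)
The plan is to assemble the three preceding lemmas and feed the result into Shulman's Theorem \ref{thm:DoubleGivesBi}. First I would note that Lemma \ref{lem:SpanCospanDoubleCat} builds the double category $\dblspcsp{C}$, Lemma \ref{lem:SpanCospanSM} lifts the (braided, symmetric) monoidal structure of $\cat{C}$ onto it, and Lemma \ref{lem:SpanCospanIsofibrant} establishes isofibrancy by producing companions and conjoints for every vertical morphism. Taken together these say precisely that $\dblspcsp{C}$ is an isofibrant (braided, symmetric) monoidal double category, which is exactly the hypothesis of Theorem \ref{thm:DoubleGivesBi}. Invoking that theorem yields a (braided, symmetric) monoidal bicategory; call it $\cat{D}$.

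It then remains to check that $\cat{D}$ is the bicategory $\spcsp{C}$ described in Theorem \ref{thm:SpCspC_is_SMCC_bicategory}. The $0$-cells of $\cat{D}$ are the objects of $\dblspcsp{C}$, i.e.\ the $\cat{C}$-objects, and by Shulman's construction the $1$-cells of $\cat{D}$ are the horizontal arrows of $\dblspcsp{C}$, i.e.\ cospans in $\cat{C}$; both agree with the statement immediately. Horizontal composition of $1$-cells is composition of cospans by pushout, again as required.

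The one point deserving attention, and the step I expect to be the crux, is the identification of the $2$-cells. Shulman's hom-category $\cat{D}(x,y)$ takes as its morphisms the globular $2$-cells of $\dblspcsp{C}$, namely those cubical spans of cospans whose left and right vertical faces are identity spans. Such a cell connecting cospans $x \to b \gets y$ and $x \to b'' \gets y$ is exactly a span of cospans with intermediate cospan $x \to b' \gets y$. Since the $2$-morphisms of $\dblspcsp{C}$ are parallel classes of cubical spans of cospans --- those sharing an outer square --- and the outer square of a globular cell records only its source and target cospans, the relation restricts to identifying any two spans of cospans with the same domain and codomain. This is precisely the parallel-class relation of the statement, so the $2$-cells of $\cat{D}$ coincide with parallel classes of spans of cospans, completing the identification $\cat{D} = \spcsp{C}$. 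The compact closed refinement claimed in Theorem \ref{thm:SpCspC_is_SMCC_bicategory} is then handled separately in the remainder of the appendix.
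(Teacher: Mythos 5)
Your proposal is correct and matches the paper's proof, which simply invokes Theorem \ref{thm:DoubleGivesBi} after the three lemmas establish that $\dblspcsp{C}$ is an isofibrant (braided, symmetric) monoidal double category. Your careful identification of the globular $2$-cells with parallel classes of spans of cospans is a detail the paper leaves implicit, but it is the same argument.
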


\begin{proof}
	Apply Theorem \ref{thm:DoubleGivesBi}.
\end{proof}

This proves the first half of Theorem 
	\ref{thm:SpCspC_is_SMCC_bicategory}. 
To prove the second half, 
we assume 
that $\cat{C}$ is a 
cocartesian monoidal bicategory.

Note that we take Stay's 
definition of a 
compact closed bicategory
	\cite{Stay_CompactClosedBicats}. 

\begin{lem}
	\label{lem:PushoutDiagram}
	The diagram
	\[
	\begin{tikzpicture}
	\node (UL) at (0,1) {$x+x+x$};
	\node (LL) at (0,0) {$x+x$};
	\node (UR) at (3,1) {$x+x$};
	\node (LR) at (3,0) {$x$};
	\path[->,font=\scriptsize,>=angle 90]
	(UL) edge node[above] {$x+\nabla$} (UR)
	(UL) edge node[left] {$\nabla +x$} (LL)
	(UR) edge node[right] {$\nabla$} (LR)
	(LL) edge node[above] {$\nabla$} (LR);
	\end{tikzpicture}
	\]
	in $\cat{C}$ is a pushout square.
\end{lem}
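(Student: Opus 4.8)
The plan is to verify the universal property of the pushout directly, since the square consists entirely of coproduct inclusions and fold maps and so can be analyzed componentwise. First I would fix notation for the coproduct inclusions: write $c_1, c_2, c_3 \from x \to x+x+x$ for the three structure maps of the top-left object, $a_1, a_2 \from x \to x+x$ for those of the top-right copy, and $b_1, b_2 \from x \to x+x$ for those of the bottom-left copy. Reading off the definitions, $x + \nabla$ sends $c_1 \mapsto a_1$ and $c_2, c_3 \mapsto a_2$, while $\nabla + x$ sends $c_1, c_2 \mapsto b_1$ and $c_3 \mapsto b_2$; both codiagonals $x+x \to x$ send each inclusion to $\id_x$.

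The square commutes because both composites $x+x+x \to x$ equal the threefold fold map $[\id_x, \id_x, \id_x]$: precomposing either composite with any $c_i$ yields $\id_x$, so the two legs agree on each summand and hence agree.

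For the universal property, suppose $W$ is equipped with a competing cocone $f, g \from x+x \to W$ satisfying $f \circ (x+\nabla) = g \circ (\nabla + x)$. Precomposing this equation with $c_1, c_2, c_3$ in turn gives the three identities $f a_1 = g b_1$, $f a_2 = g b_1$, and $f a_2 = g b_2$, which together force all four components $f a_1, f a_2, g b_1, g b_2$ to coincide with a single map $h \from x \to W$. This $h$ is the required mediator: along the top-right leg $h \circ \nabla = [h,h] = f$, and along the bottom-left leg $h \circ \nabla = [h,h] = g$. Uniqueness is immediate, since any mediator $h'$ satisfies $h' = h' \circ \nabla \circ a_1 = f a_1 = h$.

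I expect no genuine obstacle here; the only thing to watch is the bookkeeping of which inclusion each fold map hits, i.e.\ keeping the associativity conventions $x+x+x = x+(x+x)$ for $x+\nabla$ and $x+x+x = (x+x)+x$ for $\nabla + x$ straight. An alternative conceptual route would be to observe that the whole diagram is built from finite coproducts, so one may apply each representable $\cat{C}(-, W)$ and reduce to the evident pushout of sets; but the direct componentwise check above is shortest and uses only that $x+x+x$ is a coproduct.
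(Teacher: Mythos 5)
Your proof is correct and takes essentially the same approach as the paper's: both arguments verify the universal property directly by precomposing the cocone condition with the coproduct inclusions of $x+x+x$, and your mediator $h$ is exactly the paper's map $f \circ \ell = g \circ r$ obtained from the middle inclusion. If anything, yours is slightly more thorough---the paper spells out only uniqueness and leaves the existence check (that $h \circ \nabla$ recovers both legs of the cocone) implicit, which is precisely the componentwise verification you carry out.
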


\begin{proof}
	Suppose
	$f,g \from x+x \to y$ 
	form a cocone over the above diagram. 
	Let $\iota \from x \to x+x+x$ be an inclusion
	into the middle copy of $x$. 
	Observe that 
		$\ell \coloneqq (\nabla + x) \circ \iota$ and 
		$r \coloneqq (x + \nabla) \circ \iota$ 
	are	the left and right inclusions $x \to x+x$. 
	Then $f \circ \ell = g \circ r$ is a map $x \to y$, 
	which we claim to be the unique map 
	making the required diagram commute. 
	Indeed, given $h \from x \to y$ such that 
	$f = h \circ \nabla = g$, then 
	$g \circ r = f \circ \ell = h \circ \nabla \circ \ell = h$.
\end{proof}

\begin{thm}
	\label{thm:SpansCospansAreCCBicat}
	$\spcsp{C}$ is compact closed.
\end{thm}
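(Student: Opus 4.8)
The plan is to verify Stay's axioms for a compact closed bicategory \cite{Stay_CompactClosedBicats} by exhibiting, for each $0$-cell $x$, a dual object together with evaluation and coevaluation $1$-cells and the coherence data they require. Since $\cat{C}$ is cocartesian, so that $\otimes = +$ and $I$ is the initial object $0$, every object is self-dual and I take $x^* = x$. For the evaluation and coevaluation $1$-cells I use the cospans built from the fold map $\nabla \from x+x \to x$ and the unique map $! \from 0 \to x$, namely
\[
	e_x = \bigl( x + x \xrightarrow{\nabla} x \xleftarrow{!} 0 \bigr)
	\quad\text{and}\quad
	c_x = \bigl( 0 \xrightarrow{!} x \xleftarrow{\nabla} x + x \bigr).
\]
These are the cap and cup one expects, and they match the evaluation and coevaluation $1$-cells used for $\bimonspcsp{T}$ in \cite{CicalaCourser_BicatSpansCospan}; the argument below is a streamlined version of that one.

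The heart of the proof is to check the two zig-zag identities, that is, to produce the invertible cusp (triangulator) $2$-cells comparing each snake composite with an identity $1$-cell. I would compute one snake explicitly and obtain the other from the symmetry of the monoidal structure, since $e_x$ and $c_x$ are invariant under the braiding (because $\nabla$ is symmetric). Forming the composite
\[
	x \xrightarrow{\cong} x + 0 \xrightarrow{x + c_x} x + (x+x) \xrightarrow{\cong} (x+x) + x \xrightarrow{e_x + x} 0 + x \xrightarrow{\cong} x
\]
of $1$-cells and evaluating the interior cospan composition, the pushout one must form is that of $x+x \xleftarrow{x+\nabla} x+x+x \xrightarrow{\nabla + x} x+x$. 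This is exactly the square shown to be a pushout in Lemma \ref{lem:PushoutDiagram}, whose apex is $x$ with both comparison maps equal to $\nabla$. Tracing the two outer legs through this pushout and absorbing the unitor isomorphisms $x+0 \cong x \cong 0+x$, the snake composite collapses to the identity cospan $x \xrightarrow{\id} x \xleftarrow{\id} x$, so the desired cusp $2$-cell is a canonical representative of the identity. The other zig-zag is identical up to a braiding.

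What remains are the swallowtail coherence equations relating these cusp $2$-cells. Here the main simplification, and the reason parallel classes are convenient, is that a $2$-cell in $\spcsp{C}$ is determined entirely by its source and target $1$-cells, so there is at most one $2$-cell between any parallel pair and every hom-category is a thin groupoid. Consequently every equation asserted between $2$-cells holds automatically once the source and target $1$-cells agree, and the swallowtail axioms reduce to the observation that the relevant composite $1$-cells coincide. The only genuine work, and the step I expect to be most delicate, is the bookkeeping in the previous paragraph: carefully tracking the associator and unitor $1$-cells and confirming that the pushout appearing in the snake is literally the one supplied by Lemma \ref{lem:PushoutDiagram}, rather than merely isomorphic to it in some uncontrolled way. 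Once that identification is pinned down, thinness of the hom-categories discharges all higher coherence.
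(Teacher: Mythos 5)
Your proposal is correct, and its core is the same as the paper's: the same self-dual structure $x^* = x$, the same evaluation and coevaluation cospans $e = (x+x \xto{\nabla} x \gets 0)$ and $c = (0 \to x \xleftarrow{\nabla} x+x)$, and the same key computation — the snake composite reduces to the pushout of $x+x \xleftarrow{x+\nabla} x+x+x \xrightarrow{\nabla+x} x+x$, which Lemma \ref{lem:PushoutDiagram} identifies as $x$ with both comparison maps $\nabla$, so the cusps can be taken to be (representatives of) identity $2$-cells. Where you genuinely diverge is the coherence step. The paper does not verify the swallowtail equations at all: it produces the dual pair $(x,x,e,c,\alpha,\beta)$ and then invokes Pstr\k{a}gowski's theorem that any dual pair in a monoidal bicategory can be completed to a coherent one. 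You instead observe that because $2$-cells are parallel classes — all spans of cospans with the same feet are identified — every hom-category of $\spcsp{C}$ is a thin groupoid, so any equation between parallel $2$-cells, the swallowtail identities included, holds automatically. Both arguments are valid. Yours is more elementary and self-contained here, and it also quietly resolves the worry you raise about the pushout being ``literally'' the chosen one: in a thin groupoid any isomorphism of cospans over fixed feet already supplies the (unique, invertible) comparison $2$-cell, so no uncontrolled identification can cause trouble. The paper's route is the more portable one: thinness fails in the variant $\bimonspcsp{T}$, where $2$-cells are isomorphism classes rather than parallel classes, while the appeal to coherent-dual-pair completion works in both settings.
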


\begin{proof}
	The objects are self dual.
	To show this, 
	start with an object $x$.  
	Define the evaluation morphism and 
	coevaluation morphism by
	\[
		e = (x+x \xto{\nabla} x \gets 0), \quad \quad 
		c = (0 \to x \xleftarrow{\nabla} x+x).
	\]
	We next define the 
	cusp isomorphisms, 
	$\alpha$ and $\beta$.
	The domain for $\alpha$ 
	is the composite
	\[
		x \xto{\ell}
		x+x \xleftarrow{x+\nabla}
		x+x+x \xto{\nabla +x}
		x+x \xleftarrow{r}
		x
	\]
	and the codomain for $\beta$ is
	\[
		x \xto{r}
		x+x \xleftarrow{\nabla+x}
		x+x+x \xto{x+\nabla}
		x+x \xleftarrow{\ell}
		x.
	\]
	That these are both identity cospans on $x$
	follows from Lemma \ref{lem:PushoutDiagram}
	and the equations $\nabla+x = \ell \circ \nabla$ 
	and $x + \nabla = r \circ \nabla$ 
	Take $\alpha$ and $\beta$ each to be 
	the identity 2-morphism on $x$. 
	This gives a dual pair 
	$(x,x,e,c,\alpha,\beta)$ 
	which we can complete 
	to a coherent dual pair 
	\cite[p.~22]{Pstrski_DualObjectsCobord}. 
\end{proof}

\nocite{*}
\bibliographystyle{eptcs}
\bibliography{3--Catfying_zxCalc--Bibliography}
 
\end{document}